\documentclass[reqno]{amsart}
\usepackage{pict2e}
\usepackage{amsmath,amssymb}
\usepackage{relsize}
\usepackage{graphicx,color}
\usepackage[all]{xy}
%\textwidth=13.1truecm
%\oddsidemargin=2truecm
%\evensidemargin=0truecm

\theoremstyle{plain}
\newtheorem{introtheorem}{Theorem}

\newtheorem{theorem}{Theorem}[section]
\newtheorem{proposition}[theorem]{Proposition}
\newtheorem{lemma}[theorem]{Lemma}
\newtheorem{corollary}[theorem]{Corollary}

\theoremstyle{definition}
\newtheorem{definition}[theorem]{Definition}
\newtheorem{example}[theorem]{Example}

\theoremstyle{remark}
\newtheorem{remark}[theorem]{Remark}

\def\E{{\mathcal E}}
\def\B{{\mathcal B}}

\def\J{{\mathcal J}}

\def\K{{\mathcal H}}

\def\Z{{\mathbb Z}}

\def\H{{\mathcal H}}
\def\I{{\mathcal I}}
\def\T{{\mathcal T}}

\def\Hom{\mathrm{Hom}}

\def\cat0{\mathrm{cat}_0}

\def\ker{\mathrm{ker}}
\def\coker{\mathrm{coker}}
\def\im{\mathrm{im}}
\def\aut{\mathrm{aut}}
\def\H{\mathrm{Hom}}

\begin{document}
\title[]{On the group  of      self-homotopy equivalences of a   2-connected and 6-dimensional CW-complex}

\author{Mahmoud Benkhalifa}
\address{Department of Mathematics. College of  Sciences, University of Sharjah.  UAE. }

\email{mbenkhalifa@sharjah.ac.ae}

\subjclass[2000]{55P10, 55P15} \keywords{
 Whitehead's  exact sequence, $\Gamma$-automorphisms, Group of  self-homotopy equivalences}

\begin{abstract}
Let  $X$ be  a   \text{\rm{2}}-connected and \text{\rm{6}}-dimensional CW-complex $X$ such that $H_{3}(X)\otimes\Z_2=0$. This paper aims  to describe  the  group  $\E(X)$ of    the  self-homotopy equivalences of  $X$ modulo   its normal subgroup $\E_{*}(X)$  of the elements that induce  the identity on the homology groups. Making use of the Whitehead exact sequence of $X$, denoted by WES$(X)$,  we    define the   group    $\Gamma\mathcal{S}(X)$ of  $\Gamma$-automorphisms of  WES$(X)$  and we prove that   $\E(X)/\E_*(X)\cong \Gamma\mathcal{S}(X).$
\end{abstract}
\maketitle
\section{Introduction}
Let  $\bf{CW^6_2}$ denote the category of  \text{\rm{2}}-connected and  \text{\rm{6}}-dimensional CW-complexes $X$ such that  such that $H_{3}(X)\otimes\Z_2=0$.  This paper aims  to describe and study the quotient group  $\E(X)/\E_{*}(X)$, where  $\E_{}(X)$ is the group  of  self-homotopy equivalences of an object  $X$ of $\bf{CW^6_2}$ and where   $\E_{*}(X)$  is its subgroup of the elements that induce  the identity on the homology groups $H_{*}(X)=H_{*}(X,\Z)$.  The group  $\E(X)$  and its subgroup  $\E_*(X)$ are  powerful algebraic invariant of $X$ but extremely difficult to compute in general (see for instance \cite{B1,B2,B3}).

For this purpose,    to any object $X$ of   $\bf{CW^6_2}$, we  assign the following exact sequence
$$H_{6}(X)\overset{b_{6}}{\longrightarrow  }H_{4}(X)\otimes\Z_{2}\oplus \K_{2}(H_{3}(X);\Z)\longrightarrow 
\pi_{5}(X)\overset{h_{5}}{\twoheadrightarrow  } H_{5}(X) $$

\noindent  called the Whitehead exact sequence of $X$ and denoted by WES$(X)$.   Here $h_{*}$ is the Hurewicz homomorphism and $\K_{2}(H_{3}(X);\Z)$ denotes the second homology group  of  the   group $H_{3}(X)$ with integer coefficients (see \cite{Bau2,B7,Whi} for more details).

It is worth mentioning that the Whitehead exact sequence can be defined for objects  in  algebraic categories such as differential graded Lie algebras or differential
graded free chain algebras (notice that we can define a homotopy theory for these categories such that they work
“similarly” as for CW-complexes). These are simpler cases and lead to more powerful theorems (see \cite{B4,B5,Ben3,B6,B7} for more details).

\smallskip

Inspiring by the ideas developed in \cite{B15,B7},  we introduce the group  $\Gamma\mathcal{S}(X)$   of the   $\Gamma$-automorphisms of  WES$(X)$  which are   graded automorphisms $f_{*}:H_{*}(X)\to H_{*}(X)$ satisfying a certain  algebraic condition (see definition \ref{d1}).  Then   we construct a map 
 	$$\Psi:\E(X)\to \Gamma\mathcal{S}(X)\,\,\,\,\,\,\,\,\,\,\,\,\,,\,\,\,\,\,\,\,\,\,\,\,\Psi([\alpha]) =H_{*}(\alpha).$$
The main theorem is as follows. 
\begin{introtheorem}
	Let $X$ be an   object  in  $\bf{CW^6_2}$.  There exists  a short exact sequence  of groups.
	$$\E_*(X) \rightarrowtail\E(X)\overset{\Psi}{\twoheadrightarrow    } \Gamma\mathcal{S}(X),$$
\end{introtheorem}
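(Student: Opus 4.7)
My plan is to establish in turn the three properties that together yield the theorem: (i) $\Psi$ is a well-defined group homomorphism; (ii) $\ker\Psi = \E_*(X)$; and (iii) $\Psi$ is surjective. Parts (i) and (ii) will be essentially formal consequences of naturality, while (iii) carries all of the geometric content of the statement.

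For (i), given $\alpha\in\E(X)$, the naturality of the Whitehead exact sequence produces a commutative ladder from WES$(X)$ to itself, with vertical arrows $H_*(\alpha)$, $\pi_5(\alpha)$, and the induced map $H_4(\alpha)\otimes\Z_2\oplus\K_2(H_3(\alpha);\Z)$ on the middle term (the latter being functorial in $H_*(\alpha)$ alone). This commutativity is precisely the algebraic condition packaged in \defref{d1}, so $H_*(\alpha)\in\Gamma\mathcal{S}(X)$, showing that $\Psi$ is well-defined. That $\Psi$ is a homomorphism then follows immediately from the functoriality of $H_*$. For (ii), by definition $\alpha\in\E_*(X)$ if and only if $H_*(\alpha)=\mathrm{id}$, so $\ker\Psi=\E_*(X)$ holds tautologically.

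For (iii), I would fix $f_*\in\Gamma\mathcal{S}(X)$ and realize it as $H_*(\alpha)$ for some $\alpha\in\E(X)$ by an obstruction-theoretic induction over a minimal CW-decomposition of $X$, which under the standing hypotheses may be assumed to have cells only in dimensions $3,4,5,6$. First, realize $f_3\colon H_3(X)\to H_3(X)$ by a self-map $\alpha_3$ of the $3$-skeleton $X^{(3)}$, which is a Moore space $M(H_3(X),3)$; then extend $\alpha_3$ successively to $\alpha_4,\alpha_5,\alpha_6=\alpha$ across the attaching maps of the $4$-, $5$-, and $6$-cells. At each stage the obstruction to extension lives in a group that figures in WES$(X)$, and the compatibility conditions encoded in $\Gamma\mathcal{S}(X)$ are designed precisely so as to make these obstructions vanish. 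Since $H_*(\alpha)=f_*$ is an isomorphism and $X$ is simply connected, Whitehead's theorem then yields $\alpha\in\E(X)$.

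The main difficulty I anticipate is the final extension across the $6$-cells, which requires matching attaching classes in $\pi_5(X^{(5)})$ under the isomorphism induced by $\alpha_5$. Here the commutativity of the square involving $b_6$ in \defref{d1} is indispensable: it guarantees that the action of $f_*$ on $H_6(X)$ transports the attaching data of the $6$-cells correctly, up to a class absorbable by a modification of $\alpha_5$ within its homotopy class. The hypothesis $H_3(X)\otimes\Z_2=0$ intervenes precisely to control the $2$-torsion in $\K_2(H_3(X);\Z)$, so that the middle term of WES$(X)$ decomposes in a manageable way and the inductive step proceeds without further obstruction. The bulk of the proof will consist of unpacking the definition of $\Gamma$-automorphism and matching each of its clauses with the corresponding obstruction class at each skeletal stage.
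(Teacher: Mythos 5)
Your overall strategy coincides with the paper's: parts (i) and (ii) are exactly the paper's Example \ref{e1}, the displayed computation before Theorem \ref{t4}, and Corollary \ref{c22}; and surjectivity is indeed proved there by a skeleton-by-skeleton realization of a chain representative $\xi_*$ of $f_*$, with the only nontrivial obstruction occurring at the passage from $X^5$ to $X^6$. So the architecture is right and you have correctly located where the work is.

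The gap is in how that last obstruction is killed, and one of your claims about it is false as stated. You say the discrepancy between $\beta_6\circ\xi_6$ and $\pi_5(\alpha_5)\circ\beta_6$ is ``absorbable by a modification of $\alpha_5$ within its homotopy class.'' It is not: the correction the paper needs (Lemma \ref{l1}) replaces $\pi_5(\omega)$ by $\pi_5(\omega)+s\circ j_5$ for a homomorphism $s\colon\ker d_5\to\Gamma_5(X)$, realized by re-choosing the map on the $5$-cells indexed by a basis of $\ker d_5$; this in general changes the homotopy class of the self-map of $X^5$ and only preserves the induced chain map. Moreover, the mechanism by which the commutativity of the $b_6$-square produces the needed correction is not simply that ``the obstruction group figures in WES$(X)$'': one must first convert the existence of $\phi\in\aut(\pi_5(X))$ in \defref{d1} into the identity $(f_5)^*([\pi_{5}(X)])=(\tilde{\gamma})_*([\pi_{5}(X)])$ via the characteristic extension $[\theta_X]\in\mathrm{Ext}^1_{\mathbb{Z}}(H_5(X),\coker b_6)$, then lift that identity to a cochain equation $\theta_X\circ\xi_6-\tilde{\gamma}\circ\theta_X=pr\circ g\circ d_6$, and finally use $g$ to correct \emph{both} the map on $X^5$ (via Lemma \ref{l1}) \emph{and} the chain map in degree $6$ (replacing $\xi_6$ by $\xi_6-\lambda$ with $\lambda$ vanishing on $\ker d_6$, so that $H_6$ is unchanged). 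None of this appears in your outline; the assertion that ``the compatibility conditions are designed precisely so as to make these obstructions vanish'' is the theorem, not a proof of it. A further small correction: the hypothesis $H_3(X)\otimes\Z_2=0$ is used to force $\Gamma_4(X)=0$ and $\mathrm{Tor}(H_3(X);\Z_2)=0$ (so that $\Gamma_5(X)$ splits as $H_4(X)\otimes\Z_2\oplus\K_{2}(H_3(X);\Z)$ and the extensions over the $4$- and $5$-cells are unobstructed), not to control $2$-torsion inside $\K_{2}(H_3(X);\Z)$.
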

 The paper is organized as follows.

\noindent In Section 2, we recall the basic definitions of the 
Whitehead  exact sequence.    Section 3 is devoted to notion of the characteristic  extensions needed to define and study  the group  $\Gamma\mathcal{S}(X)$  of the $\Gamma$-automorphisms. In Section 4, we formulate
and prove the main theorem as well as  we give some  illustrations showing the efficiently of our results. 
\section{ The  Whitehead  exact sequence  of  an object in  $\bf{CW^6_2}$}
\subsection{The cellular complex and the Hurewicz homomorphism}
Let $X$ be a simply connected CW-complex defined by the collection
of its skeleta $(X^n)_{n\geq 0}$, where we can suppose
$X^0=X^1=\star$.

\noindent The long exact sequence of the pair $(X^n, X^{n-1})$ in
homotopy and in homology are connected by the Hurewicz homomorphism
$h_*$:
$$
\xymatrix{
	\cdots\ar[r]^{\hspace{-6mm} }&\pi_{n}(X^{n}) \ar[r]^{\hspace{-6mm} j_{n}} \ar[d]_{h_{n}}  & \pi_{n}(X^{n},X^{n-1})\ar[r]^{\beta_{n}}\ar[d]^{h_{n}}& \pi_{n-1}(X^{n-1}) \ar[d]^{h_{n-1}}\ar[r]&\cdots \\
	\cdots\ar[r]^{\hspace{-6mm} }&H_{n}(X^{n}) \ar[r]^{\hspace{-6mm} j^{H}_{n}} &
	H_{n}(X^{n},X^{n-1})\ar[r]^{\beta^{H}_{n}}& H_{n-1}(X^{n-1})\ar[r]&\cdots
}
$$
\begin{remark}
	The following elementary  facts are well-known.
	\begin{enumerate}
		\label{r5}
		\item $\pi_n(X^n, X^{n-1})$ is the free $\mathbb{Z}$-module generated by the $n$-cells of $X$.
		\item The homomorphism  $\beta_n : C_nX \rightarrow \pi_{n-1}(X^{n-1})$ represents  the attaching map for the $n$-cells $\vee S^n \rightarrow X^{n-1}$.
		\item $\ker \beta_{n}=\im j_n$
		\item $C_nX=\pi_n(X^n, X^{n-1})$ with the differential $d_n= j_{n-1}\circ\beta_{n}$ defines the cellular chain complex of $X$.  Its homology is of course the singular homology $H_*(X)$ and  we omit to refer to $\mathbb{Z}$, it is understood that we deal only with integral homology.
	\end{enumerate}
\end{remark}

\subsection{The definition of Whitehead's certain exact sequence}
Now Whitehead \cite{Whi} inserted the Hurewicz morphism in a long exact sequence connecting homology and homotopy. First he defined the following group
\begin{equation}\label{2}
	\Gamma_n(X)=\mbox{Im }( \pi_{n}(i_n) : \pi_n(X^{n-1}) \rightarrow
	\pi_n(X^n))=\mbox{ker }j_n\,\,\, ,\,\,\, \forall n\geq 2.
\end{equation}
where $i_n:X^{n-1} \hookrightarrow X^n$ is the inclusion. We notice that $\beta_{n+1}\circ d_{n+1}=0$ and so $\beta_{n+1}:
\pi_{n+1}(X^{n+1}, X^n)\rightarrow  \pi_n(X^n)$ factors through the
quotient: $b_{n+1}:  H_{n+1}(X)\rightarrow  \Gamma_n(X)$.

\noindent With this map, Whitehead  \cite{Whi} defined the following
sequence
\begin{equation}
	\label{3} \cdots \rightarrow
	H_{n+1}(X)\overset{b_{n+1}}{\longrightarrow }%
	\Gamma_n(X)\overset{}{\longrightarrow} \pi
	_{n}(X)\overset{h_{n}}{\longrightarrow }%
	H_{n}(X)\rightarrow \cdots
\end{equation}
and proved the following.
\begin{theorem}
	The above sequence is a natural exact sequence, called the certain exact sequence of Whitehead. Furthermore, if $X$ is $(n-1)$-connected, where $n\geq 3$, then
	\begin{equation}\label{x32}
		\Gamma_{n+1}(X)=H_{n}(X)\otimes \Z_{2}.	
	\end{equation}
\end{theorem}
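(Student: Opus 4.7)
The plan is to chase the commuting ladder of long exact sequences of the pairs $(X^n,X^{n-1})$ in homotopy and homology drawn in the excerpt. Three inputs suffice: the long exact sequence of the pair in both theories, the relative Hurewicz isomorphism $\pi_n(X^n,X^{n-1})\cong H_n(X^n,X^{n-1})=C_n(X)$ for the $(n-1)$-connected CW pair, and the cellular stability identifications $\pi_n(X)=\pi_n(X^{n+1})$ and $H_n(X)=H_n(X^{n+1})$. First I verify that $b_{n+1}\colon H_{n+1}(X)\to\Gamma_n(X)$ is well defined: for a cycle $z\in\ker d_{n+1}\subset C_{n+1}(X)=\pi_{n+1}(X^{n+1},X^n)$ the element $\beta_{n+1}(z)$ lies in $\ker j_n=\Gamma_n(X)$ because $j_n\circ\beta_{n+1}=d_{n+1}$; and for a boundary $z=d_{n+2}(w)=j_{n+1}(\beta_{n+2}(w))$ one has $\beta_{n+1}(z)=0$ by exactness of the long exact sequence of $(X^{n+1},X^n)$ at $\pi_{n+1}(X^{n+1},X^n)$.

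Next I prove exactness at the three positions. At $\Gamma_n(X)$: the composition $H_{n+1}(X)\to\Gamma_n(X)\to\pi_n(X)$ is zero because $\im\beta_{n+1}$ is killed by $\pi_n(X^n)\to\pi_n(X^{n+1})$; conversely, any $\gamma\in\Gamma_n(X)$ that dies in $\pi_n(X)=\pi_n(X^{n+1})$ has the form $\gamma=\beta_{n+1}(z)$ for some $z$, with $d_{n+1}(z)=j_n(\gamma)=0$, so $\gamma=b_{n+1}[z]$. At $\pi_n(X)$: given $\alpha$ with $h_n(\alpha)=0$, lift it to $\tilde\alpha\in\pi_n(X^n)$ via the surjection $\pi_n(X^n)\to\pi_n(X)$ (the relative group $\pi_n(X^{n+1},X^n)$ vanishes); the cellular chain $j_n(\tilde\alpha)\in C_n(X)$ is a cycle representing $h_n(\alpha)=0$, so $j_n(\tilde\alpha)=d_{n+1}(w)=j_n(\beta_{n+1}(w))$ for some $w$, and the corrected lift $\tilde\alpha-\beta_{n+1}(w)$ still maps to $\alpha$ and lies in $\ker j_n=\Gamma_n(X)$. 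At $H_n(X)$: a cycle $z$ with $b_n[z]=\beta_n(z)=0$ satisfies $z=j_n(\tilde\alpha)$ for some $\tilde\alpha$, and then $[z]=h_n(\alpha)$ where $\alpha$ is the image of $\tilde\alpha$ in $\pi_n(X)$. Naturality in $X$ is automatic since every ingredient is.

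For the identification $\Gamma_{n+1}(X)\cong H_n(X)\otimes\Z_2$ under the additional hypothesis that $X$ is $(n-1)$-connected with $n\geq 3$, I replace $X$ by a homotopy-equivalent CW complex with $X^{n-1}=\star$, so that $X^n=\bigvee_\alpha S^n_\alpha$ is a wedge indexed by a basis of $C_n(X)$. Hilton's formula yields $\pi_{n+1}(\bigvee S^n)\cong\bigoplus_\alpha\pi_{n+1}(S^n)=C_n(X)\otimes\Z_2$, since for $n\geq 3$ the possible Whitehead products live in dimension $2n-1>n+1$ and contribute nothing, while $\pi_{n+1}(S^n)=\Z_2\cdot\eta_n$. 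From the long exact sequence of $(X^{n+1},X^n)$, $\Gamma_{n+1}(X)=\pi_{n+1}(X^n)/\im\,\partial$ with $\partial\colon\pi_{n+2}(X^{n+1},X^n)\to\pi_{n+1}(X^n)$ the connecting map; reading $\partial$ off the Puppe cofibre sequence $X^n\to X^{n+1}\to\bigvee S^{n+1}\to\Sigma X^n$ and invoking Hilton's formula a second time in dimension $n+2$ identifies $\partial$ with $d_{n+1}\otimes\Z_2\colon C_{n+1}(X)\otimes\Z_2\to C_n(X)\otimes\Z_2$. Hence $\Gamma_{n+1}(X)=(C_n(X)/\im\,d_{n+1})\otimes\Z_2=H_n(X)\otimes\Z_2$, using that $d_n=0$ on $C_n(X)$ because $C_{n-1}(X)=0$.

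The main obstacle I expect is this last step, the identification $\partial=d_{n+1}\otimes\Z_2$. It relies on the Blakers--Massey theorem to replace $\pi_{n+2}(X^{n+1},X^n)$ by $\pi_{n+2}(\bigvee S^{n+1})$, which is valid precisely when $n+2\leq 2n-1$, i.e.\ $n\geq 3$, and on the fact that post-composition with the Hopf class $\eta$ is additive and of order two, so integer coefficients of the attaching maps reduce modulo $2$.
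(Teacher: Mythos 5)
Your proof is correct, and since the paper offers no argument for this statement --- it is quoted directly from Whitehead's paper \cite{Whi} --- your write-up supplies exactly the classical proof that the citation points to: the diagram chase through the homotopy/homology ladders of the pairs $(X^n,X^{n-1})$ for exactness, and Hilton's theorem plus Blakers--Massey plus the additivity and order two of composition with the suspended Hopf class for the identification $\Gamma_{n+1}(X)\cong H_n(X)\otimes\Z_2$ in the range $n+2\le 2n-1$, i.e.\ $n\ge 3$. The only loosely phrased step is ``reading $\partial$ off the Puppe sequence''; what one actually uses is the compatibility $\partial(\Phi_\beta\circ\eta)=(\partial\Phi_\beta)\circ\eta=f_\beta\circ\eta$ of the relative boundary with composition, but since you already invoke Blakers--Massey and the additivity of post-composition with a suspension explicitly, this is a matter of wording rather than a gap.
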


\noindent Notation. We shall denote the sequence (\ref{3}) by
$\mbox{WES}(X)$.
\medskip

Let   $\bf{CW^6_2}$ denote the category of \text{\rm{2}}-connected and \text{\rm{6}}-dimensional CW-complexes such that $H_{3}(X)\otimes\Z_2=0$.  Thus, for an  object $X$ of  $\bf{CW^6_2}$, the sequence (\ref{3}) can be written as 
\begin{equation*}
	\label{333} 
	H_{6}(X)\overset{b_{6}}{\rightarrow }%
	\Gamma_5(X)\overset{}{\rightarrow} \pi
	_{5}(X)\overset{}{\rightarrow }%
	H_{5}(X)\overset{b_{5}}{\rightarrow } \cdots\overset{}{\rightarrow }%
	H_{4}(X)\overset{b_4}{\rightarrow } \Gamma_3(X)\overset{}{\rightarrow} \pi
	_{3}(X)\overset{}{\twoheadrightarrow }%
	H_{3}(X)
\end{equation*}

\noindent Note that since $X$ is \text{\rm{6}}-dimensional we can choose the cellular chain complex of ($C_{*}X,d)$ such that   
$C_{i}X=0$ for all $i\geq 7$ which implies that 
\begin{equation}\label{zx23}
	H_{6}(X)=\ker d_6\,\,\,\,\,\,\,\,\,\,\,\,\,\text{and}\,\,\,\,\,\,\,\,\,\,\,\,\,b_6(z)=\beta(z)\,\,,\,\,\,\forall z\in H_{6}(X).
\end{equation}

The following proposition compute explicitly the groups $\Gamma_{5}(X)$, $\Gamma_{4}(X)$, $\Gamma_{3}(X)$ when  $X$  is an object of  $\bf{CW^6_2}$.
\begin{proposition}\label{tt1}
	If $X$ is an object of  $\bf{CW^6_2}$, then
	\begin{equation*}
		\Gamma_{5}(X)\cong H_{4}(X)\otimes\Z_{2}\oplus \K_{2}(H_{3}(X);\Z)\,\,\,\,\,\,\,\text{and}\,\,\,\,\,\,\,\Gamma_{4}(X)=\Gamma_{3}(X)=0.
	\end{equation*}
	Here $\K_{2}(H_{3}(X);\Z)$ denotes the second homology group  of  $H_{3}(X)$ with integer coefficients.  
\end{proposition}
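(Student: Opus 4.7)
For the vanishing statements: since $X^{2}=\ast$, we have $\pi_{3}(X^{2})=0$ and hence $\Gamma_{3}(X)=0$ by definition. For $\Gamma_{4}(X)$, formula~(\ref{x32}) with $n=3$ gives $\Gamma_{4}(X)=H_{3}(X)\otimes\Z_{2}$, which vanishes by the standing hypothesis on $X$.

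The main content is the identification of $\Gamma_{5}(X)$, and the plan is to compute $\pi_{5}$ skeleton by skeleton. Since $X^{2}=\ast$, the $3$-skeleton $X^{3}=\bigvee_{\alpha}S^{3}_{\alpha}$ is a wedge of $3$-spheres, so $C_{3}:=\pi_{3}(X^{3})$ is free abelian on the $3$-cells. Applying Hilton--Milnor, and using that $S^{3}$ is an $H$-space so that $[\iota_{3},\iota_{3}]=0$ in $\pi_{5}(S^{3})$, only basic products of lengths $1$ and $2$ contribute to $\pi_{5}(X^{3})$ (length $\geq 3$ gives $\pi_{5}(S^{\geq 7})=0$), yielding
\[
\pi_{5}(X^{3})\;\cong\;(C_{3}\otimes\Z_{2})\;\oplus\;\Lambda^{2}C_{3}.
\]

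Now feed this into the long exact sequence of the pair $(X^{4},X^{3})$. By Blakers--Massey, $\pi_{5}(X^{4},X^{3})\cong\pi_{5}(\bigvee S^{4})=C_{4}\otimes\Z_{2}$ and the connecting map $\partial_{5}$ identifies with $d_{4}\otimes\Z_{2}$. Two kinds of contributions appear in the image of the boundary $\partial_{6}:\pi_{6}(X^{4},X^{3})\to\pi_{5}(X^{3})$: the Hopf-type contributions coming from $C_{4}\otimes\Z_{2}\subset\pi_{6}(X^{4},X^{3})$, with image $(\mathrm{Im}\,d_{4})\otimes\Z_{2}$ inside $C_{3}\otimes\Z_{2}$; and the relative Whitehead products $[c_{i},e_{j}]$, whose image equals $B_{3}\wedge C_{3}\subset\Lambda^{2}C_{3}$ with $B_{3}=\mathrm{Im}\,d_{4}$. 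The hypothesis $H_{3}(X)\otimes\Z_{2}=0$ makes $B_{3}\otimes\Z_{2}\to C_{3}\otimes\Z_{2}$ an isomorphism, so the first contribution fills the whole $C_{3}\otimes\Z_{2}$ summand, and the $\Lambda^{2}C_{3}$ summand descends to $\Lambda^{2}C_{3}/(B_{3}\wedge C_{3})=\Lambda^{2}H_{3}(X)$. Combined with the $Z_{4}\otimes\Z_{2}$-summand arising from $\ker\partial_{5}$ (split off by $\eta$-multiplication on $\pi_{4}(X^{4})\cong Z_{4}$), one obtains $\pi_{5}(X^{4})\cong\Lambda^{2}H_{3}(X)\oplus(Z_{4}\otimes\Z_{2})$.

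Finally, $\Gamma_{5}(X)=\mathrm{Im}(\pi_{5}(X^{4})\to\pi_{5}(X^{5}))$ is obtained by quotienting by the image of $\pi_{6}(X^{5},X^{4})\cong C_{5}\otimes\Z_{2}$ (no Whitehead contributions here, since $[c_{5},e_{j}]\in\pi_{7}$), which sends $c_{5}\otimes 1$ to $d_{5}(c_{5})\otimes 1$ and kills $B_{4}\otimes\Z_{2}\subset Z_{4}\otimes\Z_{2}$. Hence
\[
\Gamma_{5}(X)\;\cong\;\Lambda^{2}H_{3}(X)\;\oplus\;(Z_{4}/B_{4})\otimes\Z_{2}\;=\;\Lambda^{2}H_{3}(X)\oplus H_{4}(X)\otimes\Z_{2},
\]
and since $\K_{2}(A;\Z)=H_{2}(K(A,1);\Z)\cong\Lambda^{2}A$ for every abelian group $A$, this is the stated formula. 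The main obstacle I anticipate is the careful $\mathrm{Tor}$-accounting when tensoring the short exact sequences $0\to Z_{4}\to C_{4}\to B_{3}\to 0$ and $0\to B_{3}\to C_{3}\to H_{3}(X)\to 0$ with $\Z_{2}$ in order to identify $\ker(d_{4}\otimes\Z_{2})=Z_{4}\otimes\Z_{2}$ and to split the extension for $\pi_{5}(X^{4})$; the hypothesis $H_{3}(X)\otimes\Z_{2}=0$ is precisely what forces those Tor groups to vanish.
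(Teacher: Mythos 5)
Your argument is correct in outline, but it takes a genuinely different route from the paper for the main identification. The vanishing of $\Gamma_{3}(X)$ and $\Gamma_{4}(X)$ is handled exactly as in the paper. For $\Gamma_{5}(X)$, however, the paper does not compute skeleton by skeleton: it quotes Corollary~2 of \cite{BG}, which for any $2$-connected space yields a natural short exact sequence
$$0\to\pi_{4}(X)\otimes\Z_{2}\oplus \Lambda^2(\pi_{3}(X))\to \Gamma_{5}(X)\to \mathrm{Tor}\,(\pi_{3}(X);\Z_2)\to0,$$
then uses $\pi_{3}(X)\cong H_{3}(X)$, $\pi_{4}(X)\cong H_{4}(X)$ (consequences of $\Gamma_3=\Gamma_4=0$) and the hypothesis $H_{3}(X)\otimes\Z_2=0$ to kill the $\mathrm{Tor}$ term, and finishes with Miller's theorem \cite{Mi} identifying $\Lambda^2 A$ with $H_{2}(A;\Z)$. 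Your skeletal computation via Hilton's theorem, Blakers--Massey and relative Whitehead products in effect reproves the relevant special case of that exact sequence from scratch; its merit is that it makes transparent exactly where the hypothesis enters (surjectivity of $B_{3}\otimes\Z_2\to C_{3}\otimes\Z_2$, so the $\eta^{2}$-composites exhaust the $C_{3}\otimes\Z_2$ summand, and the vanishing of the $\mathrm{Tor}$ obstruction to the splitting). The price is that several asserted steps need genuine justification, the most delicate being that $\pi_{6}(X^{4},X^{3})$ is generated by the $\eta^{2}$-composites of the characteristic maps together with the relative Whitehead products $[e_{\beta},x_{\alpha}]$: this lies one degree beyond the naive Blakers--Massey isomorphism range and requires the refined statement describing the kernel of excision in the critical degree. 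You also implicitly use $H(\eta^{2})=0$ and $[\iota_{3},\iota_{3}]=0$ to see that the two families of boundaries land in complementary Hilton summands, and the $\eta$-precomposition section of $\pi_{5}(X^{4})\to Z_{4}\otimes\Z_2$ to split the extension. All of these can be supplied, so I regard your proof as essentially sound; but the paper's citation of \cite{BG} is the short path, and it also explains what happens when $H_{3}(X)\otimes\Z_2\neq 0$.
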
 
\begin{proof}	
	First,	since $X$ is an object of  $\bf{CW^6_2}$, then we can assume that $X^2\sim \star$. Therefore, from the formula (\ref{2}), we deduce that $\Gamma_{3}(X)=0.$ 
	Moreover, by hypothesis we know that $H_{3}(X)\otimes \Z_{2}=0$, hence,  from   (\ref{x32}), we derive that
	$$\Gamma_{4}(X)=H_{3}(X)\otimes \Z_{2}=0$$
	As a result, the sequence (\ref{3}) can be written as follows.
	\begin{equation*}
		\cdots \rightarrow
		H_{4}(X)\overset{b_{4}}{\longrightarrow }%
		\Gamma_4(X)=0\overset{}{\longrightarrow} \pi
		_{4}(X)\overset{h_{4}}{\longrightarrow }%
		H_{4}(X)\overset{b_{3}}{\longrightarrow }	\Gamma_3(X)=0\rightarrow \cdots
	\end{equation*}
	Hence, 
	\begin{equation}\label{33}
		\pi_{4}(X)\cong H_{4}(X)\,\,\,\,\,\,\,\,\,\,,\,\,\,\,\,\,\,\,\,\,\,\,\pi_{3}(X)\cong H_{3}(X).
	\end{equation}
	Next,	in (\cite{BG},  Corollary 2, pp. 170),   it is shown that $\Gamma_{5}(X)$ can be inserted in the following short exact sequence
	$$0\to\pi_{4}(X)\otimes\Z_{2}\oplus \Lambda^2(\pi_{3}(X))\to \Gamma_{5}(X)\to \text{\rm{Tor}}\,(\pi_{3}(X);\Z_2)\to0.$$
	Recall that 
	$$\Lambda^2(\pi_{3}(X))=\pi_{3}(X)\otimes\pi_{3}(X)/(x\otimes x\sim 0),$$ is the exterior square of $\pi_{3}(X)$ (see \cite{BG}, the formula (1.10) for more details.)

	\noindent Furthermore,  since $H_{3}(X)\otimes\Z_2=0$ and $H_{3}(X)\cong \pi_{3}(X)$,  it follows that 	
	$$\text{\rm{Tor}}\,(\pi_{3}(X);\Z_2)=\text{\rm{Tor}}\,(H_{3}(X);\Z_2)=0.$$ Therefore
	$$\Gamma_{5}(X)\cong\pi_{4}(X)\otimes\Z_{2}\oplus \Lambda^2(\pi_{3}(X)).$$
	Next, Theorem 3 in \cite{Mi} asserts that 
	$$ \Lambda^2(\pi_{3}(X))\cong \K_{2}(\pi_{3}(X);\Z).$$
	As a result, using (\ref{33}), we obtain
	$$\Gamma_{5}(X)\cong\pi_{4}(X)\otimes\Z_{2}\oplus \K_{2}(\pi_{3}(X);\Z)\cong H_{4}(X)\otimes\Z_{2}\oplus \K_{2}(\pi_{3}(X);\Z),$$ 
	as wanted.
\end{proof} 
If  $[\alpha]:X\to X$ is a homotopy equivalence  in $\bf{CW^6_2}$, then  $\alpha$ induces an automorphism  $$\Gamma_{5}(\alpha):\Gamma_{5}(X)\to \Gamma_{5}(X),$$
which is the restriction of the homomorphism $\pi_5(\alpha^5):\pi_5(X^5)\to \pi_5(X^5)$ induced by the restriction $\alpha^5:X^5\to X^5$ of the map $\alpha$ to the space $X^5$.
\begin{proposition}\label{pp1}
If  $[\alpha]:X\to X$ is a homotopy equivalence  in $\bf{CW^6_2}$, then $\Gamma_{5}(\alpha)$ satisfies the following relation.
	\begin{equation}\label{z3}
		\Gamma_{5}(\alpha)=H_{4}(\alpha)\otimes id_{\Z_2}\oplus \K_{2}(H_{3}(\alpha);\Z),
	\end{equation}
	where the automorphism $$\K_{2}(H_{3}(\alpha);\Z):\K_{2}(H_{3}(X);\Z)\to \K_{2}(H_{3}(X);\Z),$$
	is induced by the  $H_{3}(\alpha)\in\aut(H_{3}(X))$ on the second homology groups. 
\end{proposition}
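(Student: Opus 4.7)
The plan is to verify that each isomorphism appearing in the proof of \propref{tt1} is natural in $X$, so that the induced map $\Gamma_{5}(\alpha)$ decomposes as the claimed direct sum. Since $\Gamma_{5}(\alpha)$ is by definition the restriction of $\pi_{5}(\alpha^{5})$ to $\ker j_{5}$, and the identification of $\Gamma_{5}(X)$ in \propref{tt1} is obtained by threading together three classical natural isomorphisms, the proof reduces to tracking naturality along each step.

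First, I would invoke the naturality of the Baues--Goerss short exact sequence
$$0\to \pi_{4}(X)\otimes\Z_{2}\oplus \Lambda^{2}(\pi_{3}(X))\to \Gamma_{5}(X)\to \mathrm{Tor}(\pi_{3}(X);\Z_{2})\to 0$$
with respect to cellular self-maps $\alpha:X\to X$. Because $H_{3}(X)\otimes\Z_{2}=0$ and $H_{3}(X)\cong \pi_{3}(X)$, the Tor term vanishes, so the map of short exact sequences induced by $\alpha$ collapses to an isomorphism of direct sums; hence $\Gamma_{5}(\alpha)$ is identified with $\pi_{4}(\alpha)\otimes id_{\Z_{2}}\oplus \Lambda^{2}(\pi_{3}(\alpha))$.

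Next, I would appeal to the naturality of Milnor's isomorphism $\Lambda^{2}(A)\cong \K_{2}(A;\Z)$ (\cite{Mi}, Theorem 3), which identifies $\Lambda^{2}(\pi_{3}(\alpha))$ with $\K_{2}(\pi_{3}(\alpha);\Z)$. Combined with the naturality of the Hurewicz isomorphisms $\pi_{3}(X)\cong H_{3}(X)$ and $\pi_{4}(X)\cong H_{4}(X)$ established in the proof of \propref{tt1} (where the relevant portion of WES$(X)$ has vanishing $\Gamma$-terms), one may replace $\pi_{3}(\alpha)$ by $H_{3}(\alpha)$ and $\pi_{4}(\alpha)$ by $H_{4}(\alpha)$ throughout, which yields formula (\ref{z3}).

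The principal obstacle is checking naturality of the Baues--Goerss sequence itself; this requires examining the construction in \cite{BG} to confirm that the decomposition of $\Gamma_{5}(X)$ into a tensor-product piece and an exterior-square piece is functorial with respect to maps of $2$-connected CW-complexes, rather than being an isomorphism that exists only abstractly. Once this functoriality is granted, the remaining ingredients are standard naturality statements for the Hurewicz morphism and for Milnor's identification of $\Lambda^{2}$ with $\K_{2}(-;\Z)$, so the proof of \propref{pp1} reduces to assembling these functorialities.
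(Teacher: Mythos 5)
Your proof is correct and follows essentially the same route as the paper, which simply asserts that the identification of $\Gamma_{5}(X)$ in Proposition~\ref{tt1} together with the cellularity of $\alpha$ (so that $\alpha$ maps $X^{4}$ into $X^{4}$ and $X^{3}$ into $X^{3}$) makes the formula straightforward. You are in fact more careful than the paper: you correctly isolate the one nontrivial point, namely the naturality of the Baues--Goerss decomposition of $\Gamma_{5}(X)$, which the paper's one-line proof passes over in silence.
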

\begin{proof}
	Straightforward from the fact that  
	$$	\Gamma_{5}(\alpha):	 H_{4}(X)\otimes\Z_{2}\oplus \K_{2}(H_{3}(X);\Z)\to  H_{4}(X)\otimes\Z_{2}\oplus \K_{2}(H_{3}(X);\Z),$$
	according to Proposition (\ref{tt1}) and the fact that $\alpha$ is a cellular maps applying $X^4$ into  $X^4$ and $X^3$ into  $X^3$.
\end{proof}

%\begin{proposition}\label{l01}
%	If $X$ is an object of  $\bf{CW^6_2}$, then
%	$ \Lambda^2(\pi_{3}(X))= H_{2}(\pi_{3}(X);\Z)$.
%\end{proposition}		
%\begin{proof}
%$$ \Lambda^2(\pi_{3}(X))\cong H_{2}(\pi_{3}(X);\Z)$$
%$$ \pi_{3}(X)\cong \Z_{p_{1}^{k_1}}\times \Z_{p_{2}^{k_2}}\times \cdots\times \Z_{p_{n}^{k_n}}$$
%for some integer $m \in \Bbb N$ and some primes $p_i$ with $ 2<p_1 \leq p_2 \leq \cdots \leq  p_n$ and some positive
%integers $k_i$ with $k_i \leq k_{i+1}$ whenever $p_i = p_{i+1}$	
% $$ \Lambda^2(\pi_{3}(X))\cong H_{2}\big(\Z_{p_{1}^{k_1}}\times \Z_{p_{2}^{k_2}}\times \cdots\times \Z_{p_{n}^{k_n}};\Z\big)$$

%\end{proof}

\begin{corollary}\label{c01}
	If $X$ is an object of  $\bf{CW^6_2}$, then WES$(X)$ can be written as follows. 
	$$H_{6}(X)\overset{b_{6}}{\to  }\Gamma_{5}(X)\to
	\pi_{5}(X)\overset{h_5}{\twoheadrightarrow} H_{5}(X)\,\,\,\,\,,\,\,\,\,\,\,\,\pi_{4}(X)\cong H_{4}(X)\,\,\,\,\,,\,\,\,\,\,\,\,\pi_{3}(X)\cong H_{3}(X). $$
\end{corollary}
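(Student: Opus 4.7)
The plan is to read off the statement directly from \propref{tt1} by substituting the computed values of $\Gamma_{5}(X)$, $\Gamma_{4}(X)$, and $\Gamma_{3}(X)$ into the Whitehead exact sequence displayed just before \propref{tt1}.

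First I would extract the relevant portion of WES$(X)$ in degrees $3$, $4$, and $5$. Since \propref{tt1} gives $\Gamma_{4}(X)=\Gamma_{3}(X)=0$, the segment
$$\Gamma_4(X)\longrightarrow \pi_4(X)\overset{h_4}{\longrightarrow} H_4(X)\overset{b_4}{\longrightarrow} \Gamma_3(X)$$
collapses to $0\to \pi_4(X)\to H_4(X)\to 0$, so $h_4$ is an isomorphism and $\pi_4(X)\cong H_4(X)$. Similarly, since $X$ is $2$-connected we may take $X^2\simeq \star$, so $\pi_3(X)\cong \pi_3(X^3,X^2)=H_3(X^3)\twoheadrightarrow H_3(X)$; but the Hurewicz map $h_3$ fits in WES$(X)$ with cokernel contained in $\Gamma_3(X)=0$ and kernel the image of $\Gamma_4(X)=0$, so $\pi_3(X)\cong H_3(X)$ as well.

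Next I would handle the top of the sequence. The exactness of WES$(X)$ at $H_5(X)$ reads
$$\pi_5(X)\overset{h_5}{\longrightarrow} H_5(X)\overset{b_5}{\longrightarrow} \Gamma_4(X)=0,$$
so $h_5$ is surjective, yielding the arrow $h_5\colon \pi_5(X)\twoheadrightarrow H_5(X)$. Finally, combining this with the piece $H_6(X)\overset{b_6}{\to}\Gamma_5(X)\to \pi_5(X)$ and substituting the explicit description of $\Gamma_5(X)$ from \propref{tt1} produces the displayed four-term sequence.

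There is no real obstacle here: the corollary is a formal consequence of \propref{tt1} together with the exactness of $\mathrm{WES}(X)$ in low degrees. The only point worth stating carefully in the proof is the justification that $h_3$ (implicit in $\pi_3(X)\cong H_3(X)$) is indeed an isomorphism, which follows either from the standard Hurewicz theorem applied to the $2$-connected space $X$, or equivalently from reading WES$(X)$ one step further to the right where both neighboring $\Gamma$-terms vanish.
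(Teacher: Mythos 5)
Your proposal is correct and follows the same route as the paper: the paper's proof is simply ``apply Proposition~\ref{tt1},'' and the isomorphisms $\pi_4(X)\cong H_4(X)$ and $\pi_3(X)\cong H_3(X)$ you derive from the vanishing of $\Gamma_4(X)$ and $\Gamma_3(X)$ are already established in the course of proving that proposition (formula~(\ref{33})). Your additional care about the surjectivity of $h_5$ and the identification of $h_3$ just makes explicit what the paper leaves implicit.
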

\begin{proof}
	It suffices to apply Proposition \ref{tt1}.
\end{proof}
\begin{corollary}\label{c02}
Let  $[\alpha]:X\to X$ be  a homotopy equivalence  in $\bf{CW^6_2}$ 
	and $\Gamma_{5}(\alpha)$ as in (\ref{z3}), then  $\alpha$ induces the following commutative  diagram. 
	
	\begin{picture}(300,90)(05,10)
		\put(65,80){ $H_{6}(X)\overset{b_{6}}{\longrightarrow  }\Gamma_{5}(X)\longrightarrow 
			\pi_{5}(X)\longrightarrow H_{5}(X)\rightarrow  0$}
		\put(230,76){$\vector(0,-1){50}$}
		\put(70,17){$H_{6}(X)\overset{b_{6}}{\longrightarrow  }\Gamma_{5}(X)\longrightarrow 
			\pi_{5}(X)\longrightarrow H_{5}(X)\rightarrow  0$}
		\put(75,76){$\vector(0,-1){50}$} \put(77,50){\scriptsize
			$H_{6}(\alpha)$} 
		\put(182,50){\scriptsize
			$\pi_{5}(\alpha)$} 
		%	\put(428,50){	$(2)$}
		\put(180,76){$\vector(0,-1){50}$} \put(232,50){\scriptsize
			$H_{5}(\alpha)$}
		\put(132,76){$\vector(0,-1){50}$} \put(134,50){\scriptsize
			$\Gamma_{5}(\alpha)$}
	\end{picture}
\end{corollary}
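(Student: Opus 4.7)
The plan is to leverage naturality of the Whitehead exact sequence. By cellular approximation I may assume $\alpha$ is cellular, so it restricts to maps $\alpha^n\colon X^n\to X^n$ on every skeleton. These restrictions yield morphisms between the long exact sequences of the pairs $(X^n,X^{n-1})$ in both homotopy and homology, and the Hurewicz transformation furnishes a morphism from the homotopy ladder to the homology ladder. From these naturality statements the required commutative diagram will fall out square by square.

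For the leftmost square, involving $b_6$: since $X$ is $6$-dimensional, formula (\ref{zx23}) identifies $H_6(X)$ with $\ker d_6\subset C_6 X$ and $b_6$ with the restriction of the boundary $\beta_6\colon\pi_6(X^6,X^5)\to\pi_5(X^5)$. Naturality of $\beta_6$ under the cellular map $\alpha$ gives $\pi_5(\alpha^5)\circ\beta_6=\beta_6\circ\pi_6(\alpha^6,\alpha^5)$; passing to the subgroup $H_6(X)$ of $C_6 X$ yields the first square. The middle square, involving $\Gamma_5(X)\to\pi_5(X)$, commutes because this arrow is induced by the inclusion $X^4\hookrightarrow X^5$ followed by $\pi_5(X^5)\to\pi_5(X)$, both of which are respected by $\alpha$. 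The rightmost square is the naturality of the Hurewicz map $h_5$.

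It then remains to identify the vertical arrow over $\Gamma_5(X)$ with $\Gamma_5(\alpha)$ in the explicit form given by Proposition \ref{pp1}. Since $\Gamma_5(X)=\ker j_5$ by definition (\ref{2}), and since $\pi_5(\alpha^5)$ commutes with $j_5$ by naturality, it restricts to a self-map of $\ker j_5$; this restriction is precisely $\Gamma_5(\alpha)$, which by Proposition \ref{pp1} equals $H_4(\alpha)\otimes\mathrm{id}_{\Z_2}\oplus\K_2(H_3(\alpha);\Z)$ under the isomorphism of Proposition \ref{tt1}.

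The main obstacle, if there is one, is only bookkeeping: tracking the identifications $H_6(X)\subset C_6 X$, $\Gamma_5(X)\subset\pi_5(X^5)$, and the natural maps $\pi_5(X^5)\twoheadrightarrow\pi_5(X)$, $H_5(X^5)\cong H_5(X)$, simultaneously through all three squares. Once these identifications are pinned down, commutativity is automatic from the functoriality of the long exact sequence of a pair and of the Hurewicz transformation, so the proof reduces to a one-line appeal to naturality together with Propositions \ref{tt1} and \ref{pp1}.
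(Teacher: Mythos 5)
Your proposal is correct and follows essentially the same route as the paper: the paper's proof is a one-line appeal to Proposition \ref{pp1} together with the naturality of Whitehead's exact sequence, and you have simply unpacked that naturality square by square (boundary map of the pair, inclusion of skeleta, Hurewicz map) before invoking Propositions \ref{tt1} and \ref{pp1} to identify the middle vertical arrow. The extra bookkeeping you supply is sound and consistent with the identifications $H_6(X)=\ker d_6$ and $\Gamma_5(X)=\ker j_5$ used in the paper.
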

\begin{proof}
	It suffices to apply Proposition \ref{pp1}.
\end{proof}	
\section{$\Gamma$-homomorphisms }
\subsection{The characteristic extension}
Let $X$ be an object of $\bf{CW^6_2}$ and  let's consider  the homomorphism 
\begin{equation}\label{z1}
	j_5 : \pi_5(X^5)\longrightarrow C_5X= \pi_{5}(X^{5},X^{4}),
\end{equation}
extracted from the following long sequence (see Remark \ref{r5})
$$\cdots\to C_6X\overset{\beta_{6}}{\longrightarrow}\pi_{5}(X^{5})\overset{j_{5}}{\longrightarrow} C_5X\overset{\beta_{5}}{\longrightarrow} \pi_{4}(X^{4})\overset{j_{4}}{\longrightarrow} C_4X\overset{\beta_{4}}{\longrightarrow} \pi_{4}(X^{4})\to\cdots$$
Since by Proposition \ref{tt1}  we have $\Gamma_4(X)=0$, we claim that  $$\ker
\beta_5=\ker d_5.$$
Indeed; we know that $d_5=j_4\circ\beta_5$, therefore $\ker\beta_5\subseteq\ker d_5$.  Next,  if $d_5(x)=0$, then $j_4\circ\beta_5(x)=0$. As a result,  $\beta_5(x)\in\ker j_4=\Gamma_4(X)=0$. Hence, $\beta_5(x)=0$ implying that $\ker d_5\subseteq\ker d\beta_5$. 

\medskip
The sequence (\ref{z1})  gives rise to the short exact
sequence
\begin{equation*}\label{12}
	\Gamma_4(X) \rightarrowtail  \pi_4(X^4) \twoheadrightarrow  
	\ker d_5=\ker \beta_5=\im j_4.
\end{equation*}
As $C_5X$ is a free abelian group, $\ker d_5\subset C_5X$ is also free and the later short exact sequence splits.
So we can choose a section  $\sigma_5 :\ker d_5\longrightarrow\pi_5(X^5)$ of $j_5$ and a splitting
%%\begin{diagram}
%%\mu_n : \pi_n(X^n) &  \rTo^{\cong} & \Gamma^X_n \oplus \mbox{ker }\beta_n & & (5)
%%\end{diagram}
\begin{equation}\label{13}
	\mu_{5} : \pi_5(X^5) \stackrel{\cong}{\rightarrow}\Gamma_5(X) \oplus
	\ker d_5\,\,\,\,\,\,\,\,\,\,\,,\,\,\,\,\,\,\,\,\,\,\,\,\,\mu_5(x)=(x-\sigma_5 \circ j_5(x))\oplus j_5(x).
\end{equation}
\begin{remark}\label{rr2} 
	We have the following clear facts
	\begin{enumerate}
		\item 	From the formula (\ref{13}), it follows that if $x\in\Gamma_5(X)$, then  $\mu_5(x)=x$, i.e.,  the map $\mu_5$ is the inclusion on $\Gamma_5(X) \subset \pi_5(X^5)$. 
		\item Using the same arguments,  we can obtain the following formula 
		$$	\mu_{4} : \pi_4(X^4) \stackrel{\cong}{\rightarrow}\Gamma_4(X) \oplus
		\ker d_4\,\,\,\,,\,\,\,\,\mu_4(x)=(x-\sigma_4 \circ j_4(x))\oplus j_4(x).$$
		and since  $\Gamma_4(X)=0$ (see Proposition \ref{tt1}),  it follows that
		\begin{equation}\label{z13}
			\mu_4: \pi_4(X^4) \stackrel{\cong}{\rightarrow}
			\ker d_4\,\,\,\,\,\,\,\,\,\,\,,\,\,\,\,\,\,\,\,\,\,\,\,\,\mu_4(x)= j_4(x).
		\end{equation}
	where $\sigma_4$ is a chosen section of $j_4$. 
	\end{enumerate}
\end{remark}
\bigskip

Consider the differential of the cellular complex  $d_{6} :
C_{6}X \rightarrow C_5X$. If  $(\im d_6)'\subset C_6(X)$ denotes  a copy isomorphic  to  the free abelian subgroup $\im d_{6} \subset C_5X$, then we get the following direct sum
\begin{equation}\label{18}
	C_{6}X\cong \ker d_6\oplus (\im d_6)'.
\end{equation}
%Therefore, if $\{z_i\}_{i\in \I}$ (respect. $\{l_j\}_{j\in \J}$) is a basis of $\ker d_6$ (respect. of $(\im d_6)'$), then a basis of   $C_{6}X$ can be chosen as  follows
%\begin{equation}\label{018}
%\big\{z_i+l_j\big\}_{i\in \I,j\in \J}
%\end{equation}
As a result,  using  (\ref{13}) and  (\ref{18}), we  can  rewrite the homomorphism $\beta_{6} : C_{6}X
\rightarrow \pi_5(X^5)$ to get the homomorphism 
\begin{equation*}\label{z9}
	\ker d_{6}\oplus (\im d_{6})'= C_{6}X
	\stackrel{\beta_{6}} {\longrightarrow} \pi_5(X^5) \stackrel{\mu_5}{\longrightarrow}  \Gamma_5(X) \oplus \ker d_5.
\end{equation*}
%First, using the  formula (\ref{13}), for every  $\{z_i\}_{i\in \I}$ and $\{l_j\}_{j\in \J}$ we get:
%\begin{eqnarray}
%\label{20}
%\mu  \circ \beta_{6}(z_i)&=&(\beta_{6}(z_i) - \sigma\circ d_{6}(z_i))\oplus d_{6}(z_i)=\beta_{6}(z_i)\nonumber\\
%\mu  \circ \beta_{6}(l_j)&=&(\beta_{6}(l_j) - \sigma \circ d_{6}(l_j))\oplus d_{6}(l_j)
%\end{eqnarray}
Then notice that
\begin{equation}\label{x33}
	(\im d_6)'  \stackrel{d_6}{\rightarrowtail}\mbox{ker
	}d_5 \rightarrow H_5(X),
\end{equation}
can be chosen as  a free resolution  of $H_5(X)$. Thus, we define
$$\mbox{Ext}^1_{\mathbb{Z}}(G_5,  \coker \delta)=\frac{\Hom(	(\im d_{6} )',\coker \delta)}{\im (d_6)^*},$$
where $(d_6)^*:\Hom(	\ker d_5,\coker \delta)\to \Hom(	(\im d_{6} )',\coker \delta)$.
\medskip

%First, using the  formula (\ref{13}), for every  $\{z_i\}_{i\in \I}$ and $\{l_j\}_{j\in \J}$ we get:
%\begin{eqnarray}
%\label{20}
%\mu  \circ \beta_{6}(z_i)&=&(\beta_{6}(z_i) - \sigma\circ d_{6}(z_i))\oplus d_{6}(z_i)=\beta_{6}(z_i)\nonumber\\
%\mu  \circ \beta_{6}(l_j)&=&(\beta_{6}(l_j) - \sigma \circ d_{6}(l_j))\oplus d_{6}(l_j)
%\end{eqnarray}
Due to (\ref{13}), it follows  that for every $l\in (\im d_{6})'$ we have 
$$\mu_5\circ \beta_{6}(l)=\beta_{6}(l) - \sigma_5 \circ d_{6}(l),$$ 
implying that 
$$j_5\circ\mu_5\circ \beta_{6}(l)=j_5\circ\beta_{6}(l) - j_5\circ\sigma_5 \circ d_{6}(l)=d_{6}(l)-d_{6}(l)=0,$$ 
therefore $\mu_5\circ \beta_{6}((\im d_{6})')\subset \Gamma_5(X)$. 
\medskip 

Thus,  we define  $\theta_X:(\im d_{6})'\overset{}{\rightarrow} \Gamma_6(X)\overset{}{\rightarrow} \mathrm{coker}\,b_{6}$ by 
\begin{equation}\label{x15}
	\theta_X=pr\circ(\beta_{6} - \sigma_5 \circ d_{6}).
\end{equation}
where $pr:\Gamma_5(X)\twoheadrightarrow \mathrm{coker}b_{6}$ is the projection. Hence, taking into account  the resolution (\ref{x33}), we obtain  
the extension class
\begin{equation}\label{x51}
	[\theta_X]\cong \Big[\frac{\coker b_{6}\oplus \ker d_{6}}{\im \theta_X\oplus\im d_{6}}\Big] \in \mbox{Ext}^1_{\mathbb{Z}}(H_5(X),  \mathrm{coker}\,b_{6}).
\end{equation}
\begin{definition}
	\label{d6} The class $[\phi_X] $ is called  the
	characteristic  extension of the CW-complex $X$.
\end{definition}
It is worth noting that if $\mathrm{Ext}(H_5(X), \coker b_{6})$   denotes  the abelian group of the extensions classes of  $\coker\,b_6$ by $H_5(X)$, then it is well-know that
$$\mathrm{Ext}(H_5(X), \coker b_{6})\cong\mbox{Ext}^1_{\mathbb{Z}}(H_5(X), \coker b_{6}),$$  
and using (\ref{x51}), we derive the following correspondence   
\begin{equation}\label{zz1}
	\mbox{Ext}^1_{\mathbb{Z}}(H_5(X), \coker b_{6}) \ni[\theta_X]\longleftrightarrow [\pi_{5}(X)]\in \mathrm{Ext}(H_5(X), \coker b_{6}),
\end{equation}
where  $[\pi_{5}(X)]$ is the extension class  representing the short exact sequence
$$\coker b_{6}\rightarrowtail
\pi_{5}(X)\twoheadrightarrow H_5(X),$$  extracted for the
Whitehead exact sequence given in Corollary \ref{c01}. 	

\begin{remark}
	\label{r2}
	Let $X$ be  an  objects of  $\bf{CW^6_2}$ and  $\omega :X^5 \to  X^5$ a map. If  $\xi_*=C_*(\omega)\colon   C_*X^5\rightarrow  C_*X^5$ is  the chain map induced by $\omega$,  then  the following two diagrams commute
	\begin{equation}\label{z19}
		\begin{picture}(300,80)(10,40)
			\put(25,100){ $ \pi_5(X^5)
				\hspace{1mm}\vector(1,0){40}\,C_{5}X^5$}
			\put(15,76){\scriptsize $\pi_{5}(\omega)$} \put(112,76){\scriptsize $\xi_5$}
			\put(37,97){$\vector(0,-1){39}$} \put(110,96){$\vector(0,-1){39}$}
			\put(80,103){\scriptsize $j_5$} \put(80,52){\scriptsize
				$\scriptsize j_5$} \put(23,48){ $ \pi_{5}(X^5)\hspace{1mm}\vector(1,0){40}\,C_{5}X^5$} 
			
			\put(205,100){ $ \pi_4(X^4)
				\hspace{1mm}\vector(1,0){40}\,C_{4}X^5$}
			\put(195,76){\scriptsize $\pi_{4}(\omega)$} \put(292,76){\scriptsize $\xi_4$}
			\put(217,97){$\vector(0,-1){39}$} \put(290,96){$\vector(0,-1){39}$}
			\put(260,103){\scriptsize $j_4$} \put(260,52){\scriptsize
				$\scriptsize j_4$} \put(203,48){ $ \pi_{4}(X^4)\hspace{1mm}\vector(1,0){40}\,C_{4}X^5$} 
		\end{picture}
	\end{equation}
Moreover, using the splitting $\mu_5$, given in (\ref{13}),  we obtain the following diagram	
	\begin{equation*}
		\begin{picture}(300,80)(30,40)
			\put(75,100){ $ \pi_5(X^5)
				\hspace{1mm}\vector(1,0){110}\,\Gamma_{5}(X) \oplus \ker\beta_5$}
			\put(89,76){\scriptsize $\pi_{5}(\omega)$} \put(250,76){\scriptsize $\Gamma_{5}(\omega) \oplus  \xi_5$}
			\put(87,97){$\vector(0,-1){39}$} \put(248,96){$\vector(0,-1){39}$}
			\put(170,103){\scriptsize $\mu_5$} \put(165,52){\scriptsize
				$\scriptsize \mu_5$} \put(73,48){ $ \pi_{5}(X^5)\hspace{1mm}\vector(1,0){110}\,\Gamma_{5}(X) \oplus \ker\beta'_5$} 
		\end{picture}
	\end{equation*}
	Taking into account the commutativity of the diagram (\ref{z19}), 	 it is easy to check that
	\begin{equation} \label{x5}
		(\Gamma_{5}(\omega) \oplus  \xi_5)\circ \mu_5 - \mu_5\circ \pi_5(\omega) =  \pi_5(\omega) \circ
		\sigma_5 \circ j_5 - \sigma_5 \circ j_5 \circ \pi_5(\omega).
	\end{equation}
	Next, if we consider the following diagram 
	
	\begin{picture}(300,80)(10,40)
		\put(75,100){ $ \pi_4(X^4)
			\hspace{1mm}\vector(1,0){110}\,\ker\beta_4$}
		\put(89,76){\scriptsize $\pi_{4}(\omega)$} \put(230,76){\scriptsize $  \xi_4$}
		\put(87,97){$\vector(0,-1){39}$} \put(228,96){$\vector(0,-1){39}$}
		\put(165,103){\scriptsize $\mu_4$} \put(165,52){\scriptsize
			$\scriptsize \mu_4$} \put(73,48){ $ \pi_{4}(X^4)\hspace{1mm}\vector(1,0){110}\, \ker\beta_4$} 
	\end{picture}
	
	\noindent Using   (\ref{z13}),  the diagram (\ref{z19})  and   $\Gamma_{4}(X)=0$ (see Proposition (\ref{pp1})),   we obtain
	\begin{equation*}
		\xi_4\circ \mu_4(x)-\mu_4\circ \pi_{4}(\omega)(x)=\xi_4\circ j_4(x)-j_4\circ\pi_{4}(\omega)(x)=0,
	\end{equation*}
	for all $x\in \pi_4(X^4).$
\end{remark}
\subsection{$\Gamma$-automorphisms }
\begin{definition}
	\label{d1}	Given an object      $X$  in the category  $\bf{CW^6_2}$ and    $f_{*}\in\aut (H_{*}(X))$.  We say that  $f_{*}$ is a $\Gamma$-automorphism of  WES$(X)$, if 
  there exists  $\phi\in \aut(\pi_{5}(X))$  making the following diagram commutes:
	\begin{equation}\label{z11}
		\begin{picture}(300,100)(10,-05)
			\put(65,80){ $H_{6}(X)\overset{b_{6}}{\longrightarrow  }\Gamma_{5}(X)\longrightarrow 
			\pi_{5}(X)\twoheadrightarrow H_{5}(X)$}
		\put(175,76){$\vector(0,-1){50}$}
		\put(70,17){$H_{6}(X)\overset{b_{6}}{\longrightarrow  }\Gamma_{5}(X)\longrightarrow 
			\pi_{5}(X)\twoheadrightarrow H_{5}(X)$}
		\put(75,76){$\vector(0,-1){50}$} \put(77,50){\scriptsize
			$f_{6}$} 
		\put(177,50){\scriptsize
			$\phi$} 
		%	\put(428,50){	$(2)$}
		\put(225,76){$\vector(0,-1){50}$} \put(227,50){\scriptsize
			$f_{5}$}
		\put(130,76){$\vector(0,-1){50}$} \put(132,50){\scriptsize
			$\gamma$}
	\end{picture}
	\end{equation}
where 
\begin{itemize}
	\item $\gamma= f_{4}\circ \otimes id_{\Z_2}\oplus \K_{2}\big( f_{3};\Z\big)$.
	\item $\Gamma_{5}(X)=H_{4}(X)\otimes id_{\Z_2}\oplus \K_{2}(H_{3}(X);\Z),$
	\item  The homomorphism $\K_{2}(f_{3}):\K_{2}(H_{3}(X);\Z)\to \K_{2}(H_{3}(X);\Z)$ 
	is induced by the homomorphism $f_{3}$ on the second homology groups. 
\end{itemize}
\end{definition}
\begin{example}
	\label{e1}
Let  $X$  be   an    object of $\bf{CW^6_2}$.   If  $\alpha:X\to X$ is  a homotopy equivalence, then
	$H_{*}(\alpha)$ is a $\Gamma$-automorphism according to corollary \ref{c02}. 
\end{example}
\begin{remark}\label{r02}
The  diagram (\ref{z11}) implies that   the following  diagram is commutative.
\begin{equation}\label{z0}
	\begin{picture}(300,80)(-35,10)
		\put(65,80){ $\coker b_6\rightarrowtail \pi_{5}(X)\twoheadrightarrow H_{5}(X)$}
		\put(65,16){$\coker b_6\rightarrowtail \pi_{5}(X)\twoheadrightarrow H_{5}(X)$}
		\put(75,76){$\vector(0,-1){50}$} 
		\put(127,50){\scriptsize
			$\phi$} 
		%	\put(428,50){	$(2)$}
		\put(165,76){$\vector(0,-1){50}$} \put(167,50){\scriptsize
			$f_{5}$}
		\put(125,76){$\vector(0,-1){50}$}
		\put(77,50){\scriptsize
			$\tilde{\gamma}$}
	\end{picture}
\end{equation}

\noindent where $\tilde{\gamma}$ is  induced by  $\gamma$ and the  diagram (\ref{z11}).  Note that  the following diagram commutes.
\begin{equation}\label{x50}
	\begin{picture}(300,80)(-10,40)
		\put(75,100){ $ \Gamma_5(X)
			\hspace{1mm}\vector(1,0){110}\,\Gamma_5(X)$}
		\put(89,76){\scriptsize $pr$} \put(230,76){\scriptsize $pr$}
		\put(87,97){$\vector(0,-1){39}$} \put(228,96){$\vector(0,-1){39}$}
		\put(160,103){\scriptsize $\gamma$} \put(160,52){\scriptsize
			$\scriptsize \tilde{\gamma}$} \put(75,48){ $\coker b_{6}\hspace{1mm}\vector(1,0){100}\hspace{1mm}\coker b_{6}$} 
	\end{picture}
\end{equation}
Let us denote by
$$ (f_5)^* : \mbox{Ext}(H_{5}(X),  \coker b_6)\rightarrow \mbox{Ext}(H_{5}(X),  \coker b_6),$$
the homomorphism  induced by $f_5$ and by  
$$ (\tilde{\gamma})_* : \mbox{Ext}(H_{5}(X), \coker b_6) \rightarrow (H_{5}(X), \coker b_6),$$ 
the homomorphism induced $\tilde{\gamma} $. So, the commutativity of the diagram (\ref{z0}) implies   the following  formula.
\begin{equation}\label{y15}
	(f_5)^*([\pi_{5}(X)] )= (\tilde{\gamma})_*([\pi_{5}(X)]).
\end{equation}
It is worth mentioning that the formula (\ref{y15})  means the following. First,  by virtue of the homotopy extension theorem \cite{ma}, there exists  a chain map   $\xi_{*}:(C_{*},d)\to (C_{*},d')$ such that $H_{i}(\xi_{*})=f_i$ for $ i=3,4,5,6$.  Next, from (\ref{x33}) we know that 
$$(\im d_{6} )' \stackrel{d_6}{\rightarrowtail}\ker d_5 \twoheadrightarrow H_{5}(X),$$
is a free resolution of $H_{5}(X)$. Moreover, according to (\ref{zz1}),  to the given extension $[\pi_{5}(X)]$  corresponds the characterization  class
$$[\theta_X]\in \mbox{Ext}^1_{\mathbb{Z}}(H_{5}(X), \coker b_6),$$
and to the homomorphisms $(f_5)^*$,  $(\tilde{\gamma})_*$  correspond the following two 
diagrams.

\begin{picture}(300,140)(60,-10)
	\put(100,98){$\vector(0,-1){42}$} 
	\put(90,102){$(\im d_{6})' \stackrel{d_6}{\rightarrowtail}\ker d_5 \twoheadrightarrow H_{5}(X)$} \put(90,45){$\coker b_6$}
	\put(102,78){\scriptsize $\theta_X$}
	\put(90,-8){$ \coker b_6$}	\put(100,43){$\vector(0,-1){42}$}
	\put(100,98){$\vector(0,-1){42}$} 
	\put(365,98){$\vector(0,-1){42}$} 
	\put(367,78){\scriptsize $f_5$} 
	\put(270,102){$(\im d_{6})'\stackrel{d_6}{\rightarrowtail}\ker d_5 \twoheadrightarrow H_{5}(X)$} 
	\put(102,22){\scriptsize $\tilde{\gamma} $}	\put(282,78){\scriptsize $\xi_{6}$}
	\put(270,-8){$\coker b_6$}	\put(280,43){$\vector(0,-1){42}$}
	\put(270,45){$(\im d_{6})'\stackrel{d_6}{\rightarrowtail}\ker d_5 \twoheadrightarrow H_{5}(X)$} 
	\put(280,98){$\vector(0,-1){42}$}
	\put(282,22){\scriptsize $\theta_X$}
\end{picture}

\noindent  where
\begin{equation*}
	(f_5)^*([\pi_{5}(X)])=[\theta_X\circ \xi_{6}]\,\,\,\,\,\,\,\,\,\,\,\,\,\,\,\,\,,\,\,\,\,\,\,\,\,\,\,\,\,\,\,(\tilde{\gamma} )_*([\pi_{5}(X)])=[\tilde{\gamma} \circ \theta_X],
\end{equation*}  
implying that
\begin{equation*}
	(f_5))^*([\pi_{5}(X)])-(\tilde{\gamma} )_*([\pi_{5}(X)])=[\theta_X\circ \xi_{6}-\tilde{\gamma} \circ \theta_X].
\end{equation*}
Hence, the  relation  (\ref{y15}) is equivalent to the existence of a homomorphism $g$
$$\ker d_5\stackrel{g}{\longrightarrow} \Gamma_5(X)\stackrel{pr}{\longrightarrow}\coker b_{6}\to 0 $$ 
satisfying the relation
\begin{equation}\label{x3}
	\theta_X\circ \xi_{6}-\tilde{\gamma} \circ \theta_X=pr\circ g\circ d_6.
\end{equation}
\end{remark} 
\begin{lemma}\label{l1}
Let  $\omega:X^5\to X^5$ be  a map. If $g\in \H(\ker d_5;\Gamma_{5}(X))$,  then there exists a map $\eta:X^5\to X^5$ such that
$$\pi_{5}(\eta)=\pi_{5}(\omega)+g\circ j_5. $$		
where $j_5$ is given in \rm{(\ref{z1})}. Moreover, $C_{*}(\eta)=C_{*}(\omega):C_{*}(X^5)\to C_{*}(X^5)$.
\end{lemma}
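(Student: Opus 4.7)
My plan is to construct $\eta$ by modifying $\omega$ only in the interior of the $5$-cells of $X$, keeping $\eta|_{X^4}=\omega|_{X^4}$. This automatically forces the induced cellular maps to agree in dimensions $\leq 4$, so the only thing that has to be verified is the behaviour on the $5$-cells. The first step is to extend the given $g\colon \ker d_5 \to \Gamma_{5}(X)$ to a homomorphism $\tilde g\colon C_5X \to \Gamma_{5}(X)$. Since $C_5X/\ker d_5 \cong \im d_5 \subset C_4X$ is a subgroup of a free abelian group and hence itself free, the sequence $0\to \ker d_5\to C_5X\to \im d_5\to 0$ splits, and any choice of splitting produces such a $\tilde g$.

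Next I build $\eta$ cell by cell. For each $5$-cell $e_i$ with characteristic map $\chi_i\colon(D^5,S^4)\to(X^5,X^4)$, fix a map $\rho_i\colon S^5\to X^5$ representing $\tilde g(e_i)\in \Gamma_{5}(X)\subset\pi_{5}(X^5)$. Define $\eta\circ\chi_i$ by the standard pinch-and-wedge construction: pick a small open disk inside $\mathrm{int}(D^5)$, collapse its boundary to obtain a quotient map $D^5\to D^5\vee S^5$, and use $\omega\circ\chi_i$ on the first summand and $\rho_i$ on the second. Because the modification is confined to the interior of each cell and agrees with $\omega\circ\chi_i$ on $S^4$, these pieces glue to $\omega|_{X^4}$ into a well-defined cellular self-map $\eta\colon X^5\to X^5$.

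Both claims then follow by tracking where the bumps act. For $C_*(\eta)=C_*(\omega)$: the only chains that could change are the $5$-chains, and on the generator $e_i$ the class $\eta_*(\chi_i)-\omega_*(\chi_i)\in\pi_{5}(X^5,X^4)=C_5X$ equals $j_5(\tilde g(e_i))=0$ since $\tilde g(e_i)\in\Gamma_{5}(X)=\ker j_5$. For the $\pi_5$ identity: represent $x\in\pi_{5}(X^5)$ by a cellular map $f\colon S^5\to X^5$, whose cellular $5$-chain is precisely $j_5(x)=\sum n_i e_i\in\ker d_5$. The composition $\eta\circ f$ agrees with $\omega\circ f$ away from the preimages in $S^5$ of the little disks carrying the $\rho_i$'s; collecting the local contributions with multiplicities yields
$$\pi_{5}(\eta)(x)-\pi_{5}(\omega)(x)=\sum_i n_i\,\tilde g(e_i)=\tilde g(j_5(x))=g(j_5(x)),$$
where the last equality uses $j_5(x)\in\ker d_5$ together with $\tilde g|_{\ker d_5}=g$. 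When $x\in\Gamma_5(X)$ the right-hand side is $0$, which matches the fact that such an $x$ is represented in $X^4$ and $\eta|_{X^4}=\omega|_{X^4}$.

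The step I expect to be the main obstacle is the last equation above: while the picture ``each passage of $f$ through the interior of $e_i$ contributes one copy of $\tilde g(e_i)$'' is geometrically transparent, turning it into a clean identity in $\pi_{5}(X^5)$ requires a careful cellular-approximation argument and an explicit homotopy identifying $\eta\circ f$ with a wedge-sum of $\omega\circ f$ and the appropriate signed multiples of the $\rho_i$'s. Independence of the auxiliary choices (the splitting defining $\tilde g$ and the chosen representatives $\rho_i$) is automatic once the identity $\pi_{5}(\eta)=\pi_{5}(\omega)+g\circ j_5$ is established, since both sides are intrinsic to $g$ and $\omega$.
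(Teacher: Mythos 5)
Your proof is correct and follows essentially the same route as the paper: extend $g$ over $C_5X$ via the splitting off of $\ker d_5$, perturb $\omega$ only on the $5$-cells by the pinch/co-action construction, and read off both claims from the additivity formula $\pi_5(\eta)=\pi_5(\omega)+\tilde g\circ j_5$. The only cosmetic difference is that the paper lifts $g$ to a homomorphism $\ker d_5\to\pi_5(X^4)$ so the perturbing maps factor through $X^4$ (using $\Gamma_5(X)=\im\,\pi_5(i)$), making $C_*(\eta)=C_*(\omega)$ automatic, whereas you take arbitrary representatives in $X^5$ and instead invoke $\Gamma_5(X)=\ker j_5$; the two are equivalent by the exactness in formula (\ref{2}).
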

	\begin{proof}
Since $\ker d_5$ is a free abelian group, there exists a homomorphism $\tilde{g}$ making the following diagram commutes

\begin{equation*} \label{x77}
	\begin{picture}(180,80)(40,35)
		\put(45,98){$\vector(0,-1){40}$}
		\put(37,102){$
			\ker d_5$}
		\put(37,45){$ \pi_5(X^{4})\vector(1,0){118}\vector(1,0){2}\,\im \pi_{5}(i)=\Gamma_{5}(X) \subset\pi_5(X^{5})$} \put(47,75){\scriptsize
			$\tilde{g}$}
		\put(52,98){$\vector(3,-1){138}$} \put(127,75){\scriptsize $g$}
		\put(117,48){\scriptsize $\pi_{5}(i)$}	
		%\put(97,98){$\vector(2,-1){82}$}
	\end{picture}
\end{equation*}
 where  $\pi_{5}(i):\pi_{5}(X^4)\rightarrow \pi_{5}(X^5)$ is induced by the inclusion $i:X^4\hookrightarrow X^5$. 
 Here we invoke the formula (\ref{2}).      Using the decomposition     $C_{5}X\cong \ker d_5\oplus (\im d_5)'$, given $(\ref{18})$,  let  $\B_{1}$ be a basis for $\ker d_5$ and $\B_{2}$  a basis for $(\im d_5)'$. 

%If $e_z^5\in\B_{1}$, then $\tilde{g}(e_z^5)\in \pi_{5}(X^4)$,  so 
%$\tilde{g}(z)=[\tau_z]$, where $\tau_z:S^5\to X^4$.  Therefore,  we have   
%$$g(z)=\pi_{5}(i)\circ \tilde{g}(z)=[i\circ \tau_z]\in \pi_5(X^{5})$$
Let us represent $X^5$ as a union of its 4-skeleton  $X^4$ and the 5-cells. i,e,. 
$$X^5=X^4\cup(\underset{ z\in \B_{1}}{\cup}e_z^{5})\cup(\underset{ l\in \B_{2}}{\cup}e_l^{5}).$$
Recall that $\pi_5(X^5, X^{4})$ is the free $\mathbb{Z}$-module generated by the $5$-cells of $X$. Therefore, if $e_z^5$ is a 5-cell, then the differential   $d_5:C_5(X)=\pi_{5}(X^5, X^4)\to C_4(X)=\pi_{4}(X^4, X^3)$ is defined by $d_5(e_z^5)=[\tau_z/_{S^4}]$, where $\tau_z$ is the attaching map of the 5-cell $e_z^5$.
%, i.e,. 
%$$\tau_z:(B^5, S^4)\to (X^5, X^4)\,\,\,\,\,\,\,\,\,\,\,\,\,\text{and}\,\,\,\,\,\,\,\,\,\,\,\tau/_{S^4}:(S^4,*)\to (X^4,*)\hookrightarrow(X^4, X^3).$$

Thus, the homomorphism $\tilde{g}:\ker d_5\stackrel{}{\longrightarrow}  \pi_5(X^4)$ allows us to define a  map
$$\nu:\underset{ z\in \B_{1}}{\cup}e_z^{5}\to X^4,$$
such that $[\nu(e_z^{5})]=\tilde{g}(e_z^5)$ which implies that
$$g(e_z^{5})=\pi_{5}(i)\circ \tilde{g}(e_z^{5}).$$ 
%Here we invoke the isomorphism $$ \big[\underset{i\in \B_{1}}{\vee} S^5_{i} ; X^{4}\big]\cong \H(\ker d_5;\pi_{5}(X^4)).$$
Next,  let us define  a new map $\eta:X^5\to X^5$ by its following restrictions 
$$\eta/_{X^4}=\omega\,\,\,\,\,\,\,\,\,\,\,\,\,,\,\,\,\,\,\,\,\,\,\,\,\eta/_{\underset{ l\in \B_{2}}{\cup}e_l^{5}}=\omega\,\,\,\,\,\,\,\,\,\,\,\,\,,\,\,\,\,\,\,\,\,\,\,\,\eta/_{\underset{ z\in \B_{1}}{\cup}e_z^{5}}=\omega\vee i\circ\nu.$$
%Notice that if $[\tau]\in  \pi_{5}(X^5)$, then using the following diagram

%\begin{equation*} \label{x77}
%	\begin{picture}(180,80)(40,35)
%		\put(65,98){$\vector(0,-1){40}$}
%		\put(10,102){$S^5\overset{\tau}{\longrightarrow}
%			X^5=X^4\cup(\underset{ i\in \B_{1}}{\cup}e_i^{5})\cup(\underset{ j\in \B_{2}}{\cup}e_j^{5})\overset{pr}{\longrightarrow}\underset{ i\in \B_{1}}{\cup}e_i^{5}$}
%		\put(60,45){$ X^{5}$} \put(67,75){\scriptsize
%			$\omega\vee\nu$}
%		\put(157,75){\scriptsize $\nu$}	
%		\put(197,94){$\vector(-3,-1){122}$}
%	\end{picture}
%\end{equation*}

\noindent Clearly, the homomorphism   $\pi_{5}(\eta):\pi_{5}(X^5)\to \pi_{5}(X^5)$ satisfies  
\begin{equation*}
\pi_{5}(\eta)=\pi_{5}(\omega)+[i\circ\nu(e_z^{5})]=\pi_{5}(\omega)+g\circ j_5.
\end{equation*}
Recall that  we have $\pi_{5}(X^5)\stackrel{j_5}{\longrightarrow} \ker d_5\stackrel{g}{\longrightarrow} \Gamma_{5}(X) \subset\pi_5(X^{5})$.
\end{proof}
\begin{remark}
	\label{r01}   
Later on we need the  following elementary fact.

\noindent Let $X$  be  an object of $\bf{CW^6_2}$ and  $\tau :X^5 \to  X^5$ a map between its 5-skeleton . If $\rho$ is a  
homomorphism making the following diagram commutes

\begin{picture}(300,80)(10,40)
	\put(75,100){ $ C_{6}X
		\hspace{1mm}\vector(1,0){140}C_{6}X$}
	\put(89,76){\scriptsize $\beta_{6}$} \put(250,76){\scriptsize $\beta_{6}$}
	\put(87,97){$\vector(0,-1){39}$} \put(248,96){$\vector(0,-1){39}$}
	\put(165,103){\scriptsize $\rho$} \put(160,52){\scriptsize
		$\scriptsize \pi_{5}(\tau)$} \put(73,48){ $ \pi_{5}(X^5)\hspace{1mm}\vector(1,0){130}\hspace{1mm}\pi_{5}(X^5)$} 
\end{picture}

\noindent then $\tau$ can be extended to a homotopy equivalence  $\tilde{\tau}:X \to X$ with $C_{6}(\tilde{ \tau }) = \rho$.
\end{remark}
\subsection{The group $\Gamma\mathcal{G}(X)$}
Let $X$  be  an object of $\bf{CW^6_2}$ and  let   $\Gamma\mathcal{G}(X)$ denote the set of all the  $\Gamma$-automorphisms of  WES$(X)$.  
\begin{proposition}
	\label{p2}                                                    
	The set $\Gamma\mathcal{G}(X)$ is a group.
\end{proposition}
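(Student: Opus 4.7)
The plan is to verify the three group axioms for $\Gamma\mathcal{G}(X)$ under composition in $\aut(H_*(X))$, with the group operation inherited directly from composition of graded automorphisms of homology. Throughout the proof, the key technical input is the \emph{functoriality} of the two endofunctors $-\otimes\Z_2$ and $\K_2(-;\Z)$ on abelian groups, together with the fact that the witness $\phi$ in Definition \ref{d1} is required to be an automorphism and therefore composable and invertible.

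First I would dispatch the identity. Taking $\phi=\mathrm{id}_{\pi_5(X)}$ and $f_i=\mathrm{id}_{H_i(X)}$ for $i=3,4,5,6$, the associated $\gamma$ becomes
$$\gamma=\mathrm{id}_{H_4(X)}\otimes\mathrm{id}_{\Z_2}\oplus\K_2(\mathrm{id}_{H_3(X)};\Z)=\mathrm{id}_{\Gamma_5(X)},$$
and the diagram (\ref{z11}) commutes tautologically, so $\mathrm{id}_{H_*(X)}\in\Gamma\mathcal{G}(X)$.

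Next I would handle closure under composition. Suppose $f_*,g_*\in\Gamma\mathcal{G}(X)$ with witnesses $\phi_f,\phi_g\in\aut(\pi_5(X))$ and associated $\gamma_f,\gamma_g$. I propose $\phi_f\circ\phi_g$ as the witness for $f_*\circ g_*$. By functoriality,
$$(f_4\circ g_4)\otimes\mathrm{id}_{\Z_2}=(f_4\otimes\mathrm{id}_{\Z_2})\circ(g_4\otimes\mathrm{id}_{\Z_2}),\quad \K_2(f_3\circ g_3;\Z)=\K_2(f_3;\Z)\circ\K_2(g_3;\Z),$$
so the $\gamma$ attached to $f_*\circ g_*$ is exactly $\gamma_f\circ\gamma_g$. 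Stacking vertically the two commuting diagrams (\ref{z11}) for $f_*$ and for $g_*$ then produces a commuting diagram for $f_*\circ g_*$ with the desired witness.

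Finally I would construct inverses. Given $f_*\in\Gamma\mathcal{G}(X)$ with witness $\phi$ and associated $\gamma$, each $f_i$ is an automorphism, so the inverses $f_i^{-1}$ exist, and functoriality yields
$$\gamma^{-1}=(f_4^{-1}\otimes\mathrm{id}_{\Z_2})\oplus\K_2(f_3^{-1};\Z),$$
which is the $\gamma$ attached to $f_*^{-1}$. Since $\phi\in\aut(\pi_5(X))$, the inverse $\phi^{-1}$ is again an automorphism, and inverting every vertical arrow in the commuting diagram (\ref{z11}) for $f_*$ yields the corresponding commuting diagram for $f_*^{-1}$ with witness $\phi^{-1}$.

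There is no real obstacle: everything reduces to the remark that $-\otimes\Z_2$ and $\K_2(-;\Z)$ are covariant functors, and that $\aut$ is closed under composition and inversion. The only minor point requiring care is that the witness $\phi$ is existentially quantified in Definition \ref{d1}, so for each group axiom one must exhibit an explicit candidate ($\mathrm{id}$, $\phi_f\circ\phi_g$, $\phi^{-1}$) and verify it is an automorphism of $\pi_5(X)$; this is immediate in all three cases.
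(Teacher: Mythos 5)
Your proof is correct and follows essentially the same route as the paper: exhibit explicit witnesses ($\phi'\circ\phi$ for composition, $\phi^{-1}$ for inverses), use the functoriality of $-\otimes\Z_2$ and $\K_{2}(-;\Z)$ to identify the composite and inverse $\gamma$'s, and stack or invert the commuting diagrams (\ref{z11}). The only difference is that you also verify the identity element explicitly, which the paper leaves implicit.
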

\begin{proof}
	First let us prove that if  $f_{*},f'_{*}\in\Gamma\mathcal{G}(X)$, then the composition
	$$f'_{*}\circ f_{*}\in \Gamma\mathcal{G}(X).$$
	Indeed,  from definition \ref{d1} we derive that there exist  $\phi,\phi'\in\aut(\pi_{5}(X))$ making the following  diagrams commutes
	
	\begin{picture}(300,160)(-05,-55)
		\put(65,80){ $H_{6}(X)\overset{b_{6}}{\longrightarrow  }\Gamma_{5}(X)\longrightarrow 
			\pi_{5}(X)\twoheadrightarrow H_{5}(X)$}
		\put(175,76){$\vector(0,-1){50}$}
		\put(70,17){$H_{6}(X)\overset{b_{6}}{\longrightarrow  }\Gamma_{5}(X)\longrightarrow 
			\pi_{5}(X)\twoheadrightarrow H_{5}(X)$}
		\put(75,76){$\vector(0,-1){50}$} \put(77,50){\scriptsize
			$f_{6}$} 
		\put(177,50){\scriptsize
			$\phi$} 
		%	\put(428,50){	$(2)$}
		\put(225,76){$\vector(0,-1){50}$} \put(227,50){\scriptsize
			$f_{5}$}
		\put(130,76){$\vector(0,-1){50}$} \put(132,50){\scriptsize
			$\gamma$}
		\put(175,14){$\vector(0,-1){50}$}
		\put(70,-45){$H_{6}(X)\overset{b_{6}}{\longrightarrow  }\Gamma_{5}(X)\longrightarrow 
			\pi_{5}(X)\twoheadrightarrow H_{5}(X)$}
		\put(75,14){$\vector(0,-1){50}$} \put(77,-15){\scriptsize
			$f'_{6}$} 
		\put(177,-15){\scriptsize
			$\phi'$} 
		%	\put(428,50){	$(2)$}
		\put(225,14){$\vector(0,-1){50}$} \put(227,-15){\scriptsize
			$f'_{5}$}
		\put(130,14){$\vector(0,-1){50}$} \put(132,-15){\scriptsize
			$\gamma'$}
	\end{picture}	
	
	\noindent where  $$\gamma=f_{4}\otimes id_{\Z_2}\oplus \K_{2}( f_{3};\Z)\,\,\,\,\,\,\,\,\,\,,\,\,\,\,\,\,\,\,\,\,\gamma'= f'_{4}\otimes id_{\Z_2}\oplus \K_{2}(f'_{3};\Z).$$
	Here we invoke the diagram (\ref{z11}).  As a result, the following diagram is obviously commutative
	
	\begin{picture}(300,90)(-25,10)
		\put(65,80){ $H_{6}(X)\overset{b_{6}}{\longrightarrow  }\Gamma_{5}(X)\longrightarrow 
			\pi_{5}(X)\twoheadrightarrow H_{5}(X)$}
		\put(175,76){$\vector(0,-1){50}$}
		\put(70,17){$H_{6}(X)\overset{b_{6}}{\longrightarrow  }\Gamma_{5}(X)\longrightarrow 
			\pi_{5}(X)\twoheadrightarrow H_{5}(X)$}
		\put(75,76){$\vector(0,-1){50}$} \put(77,50){\scriptsize
			$f'_{6}\circ f_6$} 
		\put(177,50){\scriptsize
			$\phi'_{5}\circ \phi_{5}$} 
		%	\put(428,50){	$(2)$}
		\put(225,76){$\vector(0,-1){50}$} \put(227,50){\scriptsize
			$f'_{5}\circ f_5$}
		\put(130,76){$\vector(0,-1){50}$} \put(132,50){\scriptsize
			$\gamma'\circ\gamma$}
	\end{picture}
	
	\noindent where $\gamma'\circ \gamma= f'_{4}\circ f_{4}\otimes id_{\Z_2}\oplus \K_{2}( f'_{3}\circ f_{3};\Z)$. Hence, 
	  $f'_{*}\circ f_{*}\in \Gamma\mathcal{G}(X)$.
	\medskip
	
	Finally, if  $ f_{*}\in \Gamma\mathcal{G}(X)$, then there is  $\phi\in\aut(\pi_{4}(X))$ making the  diagram $(\ref{z11})$.  By definition $f_{*}$ is a graded automorphism,  so  is   $f^{-1}_{*}$   and     the following diagram is obviously commutative
	
	\begin{picture}(300,90)(-05,10)
		\put(65,80){ $H_{6}(X)\overset{b_{6}}{\longrightarrow  }\Gamma_{5}(X)\longrightarrow 
			\pi_{5}(X)\twoheadrightarrow H_{5}(X)$}
		\put(175,76){$\vector(0,-1){50}$}
		\put(70,17){$H_{6}(X)\overset{b_{6}}{\longrightarrow  }\Gamma_{5}(X)\longrightarrow 
			\pi_{5}(X)\twoheadrightarrow H_{5}(X)$}
		\put(75,76){$\vector(0,-1){50}$} \put(77,50){\scriptsize
			$f^{-1}_{6}$} 
		\put(177,50){\scriptsize
			$\phi^{-1}$} 
		%	\put(428,50){	$(2)$}
		\put(225,76){$\vector(0,-1){50}$} \put(227,50){\scriptsize
			$f^{-1}_{5}$}
		\put(130,76){$\vector(0,-1){50}$} \put(132,50){\scriptsize
			$\gamma^{-1}$}
	\end{picture}

	\noindent  where  $\gamma^{-1}= f^{-1}_{4}\circ\otimes id_{\Z_2}\oplus \K_{2}( f^{-1}_{3};\Z)$. As a result, $ f^{-1}_{*}\in \Gamma\mathcal{G}(X).$
\end{proof}
\section{Main results}
Recall that $\E(X)$ denotes  the  group of  self-homotopy equivalences of  $X$ and  $\E_{*}(X)$ denotes its subgroup of the elements that induce  the identity on $H_{*}(X)$.  It worth mentioning  that Example \ref{e1} allows to define the following map
 $$\Psi:\E(X)\overset{}{\longrightarrow  } \Gamma\mathcal{S}(X) \,\,\,\,\,\,\,\,\,\,\,\,\,,\,\,\,\,\,\,\,\,\,\,\,\Psi([\alpha]) =H_{*}(\alpha),$$
and an easy  computation shows
$$\Psi([\alpha].[\alpha'])=\Psi([\alpha\circ\alpha'])=H_{*}(\alpha\circ\alpha')=H_{*}(\alpha)\circ H_{*}(\alpha')=\Psi([\alpha])\circ \Psi([\alpha']),$$
implying  that $\Psi$ is a homomorphism of groups. 
\medskip

The first main result  of this paper can be stated as follows.
\begin{theorem}
	\label{t4}                                                    
The homomorphism $\Psi$  is surjective.    
\end{theorem}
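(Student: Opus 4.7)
Given $f_{\ast}\in \Gamma\mathcal{S}(X)$ with accompanying automorphism $\phi\in \aut(\pi_{5}(X))$ from the diagram (\ref{z11}), the plan is to build a self-homotopy equivalence $\alpha:X\to X$ with $H_{\ast}(\alpha)=f_{\ast}$ skeleton by skeleton. First, by the homotopy extension theorem invoked in Remark \ref{r02}, I lift $f_{\ast}$ to a chain map $\xi_{\ast}:(C_{\ast}X,d)\to (C_{\ast}X,d)$ with $H_{i}(\xi_{\ast})=f_{i}$ for $i=3,4,5,6$. Standard cellular approximation realizes the truncation $\xi_{\leq 5}$ as a cellular map $\omega:X^{5}\to X^{5}$ with $C_{\ast}(\omega)=\xi_{\ast}$; by formula (\ref{z3}) of Proposition \ref{pp1}, the induced automorphism $\Gamma_{5}(\omega)$ automatically equals $f_{4}\otimes id_{\Z_{2}}\oplus \K_{2}(f_{3};\Z)=\gamma$.

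The crucial question is whether $\omega$ extends to a self-map of $X$. By Remark \ref{r01}, $\omega$ extends to a cellular map whose $C_{6}$-level equals $\xi_{6}$ iff $\pi_{5}(\omega)\circ \beta_{6}=\beta_{6}\circ \xi_{6}$. Since $\xi_{\ast}$ is a chain map, the difference
\[ O:=\pi_{5}(\omega)\circ \beta_{6}-\beta_{6}\circ \xi_{6}\colon C_{6}X\longrightarrow \pi_{5}(X^{5}) \]
dies after applying $j_{5}$, so it takes values in $\ker j_{5}=\Gamma_{5}(X)$. Because $f_{\ast}$ is a $\Gamma$-automorphism, Remark \ref{r02} supplies the Ext-class equality (\ref{y15}), which via the resolution (\ref{x33}) translates into the existence of $g:\ker d_{5}\to \Gamma_{5}(X)$ satisfying the relation (\ref{x3}): $\theta_{X}\circ \xi_{6}-\tilde{\gamma}\circ \theta_{X}=pr\circ g\circ d_{6}$. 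Unpacking $\theta_{X}=pr\circ(\beta_{6}-\sigma_{5}\circ d_{6})$, using $\Gamma_{5}(\omega)=\gamma$, and exploiting the decomposition $C_{6}X=\ker d_{6}\oplus (\im d_{6})'$ from (\ref{18}), this relation reads $pr\circ O=pr\circ g\circ d_{6}$ on $(\im d_{6})'$; meanwhile, on $\ker d_{6}=H_{6}(X)$ the obstruction vanishes directly from the leftmost square $\gamma\circ b_{6}=b_{6}\circ f_{6}$ of (\ref{z11}). After adjusting $g$ by a coboundary to kill the $\coker b_{6}$ ambiguity, I obtain $O=g\circ d_{6}=g\circ j_{5}\circ \beta_{6}$ as a genuine equality in $\Gamma_{5}(X)$.

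Lemma \ref{l1} (applied to $-g$) then yields a cellular map $\eta:X^{5}\to X^{5}$ with $C_{\ast}(\eta)=C_{\ast}(\omega)$ and $\pi_{5}(\eta)=\pi_{5}(\omega)-g\circ j_{5}$, whence $\pi_{5}(\eta)\circ \beta_{6}=\beta_{6}\circ \xi_{6}$. Remark \ref{r01} now extends $\eta$ to a cellular map $\alpha:X\to X$ with $C_{\ast}(\alpha)=\xi_{\ast}$, so $H_{\ast}(\alpha)=f_{\ast}$. Since each $f_{i}$ is an automorphism and $X$ is simply connected, Whitehead's theorem makes $\alpha$ a homotopy equivalence, giving $\Psi([\alpha])=f_{\ast}$. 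The main obstacle lies in the second paragraph: converting the algebraic $\Gamma$-automorphism hypothesis (an equality of Ext classes defined only modulo coboundaries) into a literal equality in $\Gamma_{5}(X)$ that pointwise cancels the geometric obstruction $O$. This requires careful bookkeeping with the splittings $\mu_{5}$, $\sigma_{5}$, the projection $pr:\Gamma_{5}(X)\to \coker b_{6}$, and the direct sum decompositions underlying the characteristic extension.
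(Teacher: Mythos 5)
Your overall strategy coincides with the paper's: realize $\xi_{\leq 5}$ by a map $\omega:X^5\to X^5$, observe that the obstruction $O=\pi_5(\omega)\circ\beta_6-\beta_6\circ\xi_6$ lands in $\Gamma_5(X)$, use the $\Gamma$-automorphism condition in the form (\ref{x3}), and correct $\pi_5(\omega)$ by a term of the shape $(\cdot)\circ j_5$ via Lemma \ref{l1}. But the step you yourself flag as the main obstacle contains a genuine gap. The relation (\ref{x3}) controls the obstruction only \emph{after} applying $pr:\Gamma_5(X)\twoheadrightarrow\coker b_6$; what you actually obtain (after the bookkeeping with $\sigma_5$, which also produces the extra summand $\pi_5(\omega)\circ\sigma_5-\sigma_5\circ\xi_5$ that your displayed identity omits) is that a corrected obstruction $O'$ has image in $\im b_6$. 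Your claim that "adjusting $g$ by a coboundary" upgrades this to a literal equality $O=g\circ d_6$ in $\Gamma_5(X)$ is false in general: writing $O'=g'''\circ d_6$ requires extending the homomorphism $O'\circ (d_6|_{(\im d_6)'})^{-1}:\im d_6\to \Gamma_5(X)$ over all of $\ker d_5$, and the obstruction to that lives in $\mathrm{Ext}^1_{\mathbb{Z}}(H_5(X),\Gamma_5(X))$, not in $\mathrm{Ext}^1_{\mathbb{Z}}(H_5(X),\coker b_6)$ where your hypothesis sits. A concrete failure: $\ker d_5=\Z$, $\im d_6=2\Z$, $H_5(X)=\Z_2$, $\Gamma_5(X)=\Z_2$ with $b_6$ surjective, so $\coker b_6=0$ and the Ext condition is vacuous, yet the map $2\Z\to\Z_2$, $2\mapsto 1$, does not extend to $\Z\to\Z_2$. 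So no choice of $g$ alone kills $O$ pointwise.

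The paper closes exactly this gap by perturbing the \emph{other} side of the square: since the residual discrepancy $\beta_6\circ\xi_6-t\circ\beta_6$ has image in $\im b_6=\beta_6(\ker d_6)$ and $C_6X$ is free, it lifts through $\beta_6$ to a homomorphism $\lambda:C_6X\to C_6X$ chosen to vanish on $\ker d_6$, and one replaces $\xi_6$ by $\rho=\xi_6-\lambda$ before invoking Remark \ref{r01}. This is harmless for homology precisely because $\lambda|_{\ker d_6}=0$, so $\rho|_{\ker d_6}=\xi_6|_{\ker d_6}$ still induces $f_6$. Without this second correction your extension step "Remark \ref{r01} now extends $\eta$" does not go through, because $\pi_5(\eta)\circ\beta_6=\beta_6\circ\xi_6$ need not hold. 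The remainder of your argument (vanishing of the obstruction on $\ker d_6$ via $\gamma\circ b_6=b_6\circ f_6$, the use of Lemma \ref{l1}, and the appeal to Whitehead's theorem at the end) matches the paper and is fine.
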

\begin{proof}
Let  $f_{*}\in \aut(H_{*}(X))$ be  a  $\Gamma$-automorphism of  WES$(X)$. Recall that 
$$WES(X)\,\,:\,\,H_{6}(X)\overset{b_{6}}{\longrightarrow  }\Gamma_{5}(X)\longrightarrow 
\pi_{5}(X)\twoheadrightarrow H_{5}(X).$$
By definition we have 
$$\pi_{4}(X)=
H_{4}(X)\overset{f_4}{\longrightarrow} H_{4}(X)=
\pi_{4}(X) \,\,\,\,\,\,\,\,\,,\,\,\,\,\,\,\,\,\,\pi_{3}(X)=
H_{4}(X)\overset{f_3}{\longrightarrow}  H_{3}(X)=
\pi_{3}(X),$$
and there exists $\phi\in\aut(\pi_5(X))$ making the following  diagram  commutes.
\begin{equation}\label{y11}
	\begin{picture}(300,90)(-05,10)
	\put(65,80){ $H_{6}(X)\overset{b_{6}}{\longrightarrow  }\Gamma_{5}(X)\longrightarrow 
		\pi_{5}(X)\twoheadrightarrow H_{5}(X)$}
	\put(175,76){$\vector(0,-1){50}$}
	\put(70,17){$H_{6}(X)\overset{b_{6}}{\longrightarrow  }\Gamma_{5}(X)\longrightarrow 
		\pi_{5}(X)\twoheadrightarrow H_{5}(X)$}
	\put(75,76){$\vector(0,-1){50}$} \put(77,50){\scriptsize
		$f_{6}$} 
	\put(177,50){\scriptsize
		$\phi$} 
	%	\put(428,50){	$(2)$}
	\put(225,76){$\vector(0,-1){50}$} \put(227,50){\scriptsize
		$f_{5}$}
	\put(130,76){$\vector(0,-1){50}$} \put(132,50){\scriptsize
		$\gamma$}
\end{picture}
\end{equation}

\noindent where
\begin{equation}\label{x36}
	\gamma=f_4\otimes id\oplus \K_2(f_3;\Z).
\end{equation}
Next,  by virtue of the homotopy extension theorem \cite{ma}, there is  a  chain map $\xi_{*}:(C_{*}(X),d)\to (C_{*}(X),d)$ such that $H_{*}(\xi_{*})=f_{*}$. 
%	\begin{equation*}\label{a1}
%	f_{i}(x+\im d_{i+1})=\xi_i(x)+\im d'_{i+1}\,\,\,\,\,,\,\,\,\,\,\,x\in \ker d_{k}\,\,\,\,\,,\,\,\,\,\,\, \forall i.
%	\end{equation*}
Recall that $H_{*}(C_{*}(X))=H_{*}(X).$
	
Since $\xi_{*}$ is a chain map and $X^2=\star$,  the following diagram commutes. 

\begin{picture}(300,80)(-05,40)
	\put(75,100){ $ C_{4}X
		\hspace{1mm}\vector(1,0){140}C_{4}X$}
	\put(89,76){\scriptsize $d_{4}$} \put(250,76){\scriptsize $d_{4}$}
	\put(87,97){$\vector(0,-1){39}$} \put(248,96){$\vector(0,-1){39}$}
	\put(165,103){\scriptsize $\xi_4$} \put(160,52){\scriptsize
		$\scriptsize \pi_{3}(\omega )=\xi_3$} \put(57,48){ $C_{3}X= \pi_{3}(X^3)\hspace{1mm}\vector(1,0){90}\hspace{1mm}C_{3}X=\pi_{3}(X^3)$} 
\end{picture}

\noindent Applying Remark \ref{r01}, we get  a map  $\omega:X^4\to X^4$ 
such that the chain map $C_{*}(\omega) :C_{*}(X^4)\to C_{*}(X^4)$ satisfies the relation $C_{\leq 4}(\omega)=\xi_{\leq 4}$. 

\noindent 	Now, let us consider the following diagram

\begin{picture}(300,80)(-10,35)
	\put(75,100){ $ C_{5}X
		\hspace{1mm}\vector(1,0){140}C_{5}X$}
	\put(89,76){\scriptsize $\beta_{5}$} \put(250,76){\scriptsize $\beta_{5}$}
	\put(87,97){$\vector(0,-1){39}$} \put(248,96){$\vector(0,-1){39}$}
	\put(165,103){\scriptsize $\xi_5$} \put(160,52){\scriptsize
		$\scriptsize \pi_{4}(\omega )$} 
	\put(72,48){ $ \pi_{4}(X^4)\hspace{1mm}\vector(1,0){130}\hspace{1mm}\pi_{4}(X^4)\overset{\mu_4}{\longrightarrow} \ker \beta_4$}
\end{picture}

\noindent Using the formula (\ref{z13}), the diagram (\ref{z19}),  an easy computation shows that 
$$\mu_4\circ  \pi_{4}(\omega )\circ\beta_5=j_4\circ\pi_{4}(\omega )\circ\beta_5=\xi_4\circ j_4\circ\beta_5=\xi_4\circ d_5,$$
$$\mu_4\circ  \beta_5\circ \xi_5=j_4\circ\beta_5\circ \xi_5=d_5\circ \xi_5.$$
Hence, $\mu_4\circ  \pi_{4}(\omega )\circ\beta_5-\mu\circ  \beta_5\circ \xi_5=0.$ 
As $\mu_4$ is an isomorphism, it follows  that 
$$ \pi_{4}(\omega )\circ\beta_5-\beta_5\circ \xi_4=0.$$
Applying again Remark \ref{r01}, we can extend the map $\omega$ to obtain a map (which we denote also by $\omega$) 
$\omega:X^5\to X^5$ 
such that the chain map $C_{*}(\omega) :C_{*}(X^5)\to C_{*}(X^5)$  satisfies  $C_{\leq 5}(\omega)=\xi_{\leq 5}$ . i.e,.
\begin{equation}\label{x30}
	H_{4}(\omega)=f_4\,\,\,\,\,\,\,\,\,\,\,\,\,\,\,\,,\,\,\,\,\,\,\,\,\,\,\,\,\,\,\,\,\,\,H_{3}(\omega)=f_3.
\end{equation}
Now, by combining Proposition \ref{pp1}, the relations (\ref{x36})  and (\ref{x30}),  we derive the following identification.
\begin{equation}\label{x4}
	\Gamma_{5}(\omega)=f_4\otimes id\oplus H_2(f_3;\Z)=\gamma.
\end{equation}
Next, let us  consider the following diagram

\begin{picture}(300,180)(-20,-65)
	\put(75,100){\scriptsize $ C_{6}X
		\hspace{1mm}\vector(1,0){145}\,C_{6}X$}
	\put(89,76){\scriptsize $\beta_{6}$} \put(250,76){\scriptsize $\beta_{6}$}
	\put(87,97){$\vector(0,-1){39}$} \put(248,96){$\vector(0,-1){39}$}
	\put(165,103){\scriptsize $\xi_6$} \put(160,52){\scriptsize
		$\scriptsize \pi_{5}(\omega )$} \put(75,48){\scriptsize $ \pi_{5}(X^5)\hspace{1mm}\vector(1,0){132}\hspace{1mm}\pi_{5}(X^5)$} 
	\put(65,-4){\scriptsize $\Gamma_{5}(X)\oplus \ker d_5\hspace{1mm}\vector(1,0){105}\hspace{1mm}
		\Gamma_{5}(X)\oplus \ker d_5$} \put(89,24){\scriptsize
		$\mu_{5}$} \put(250,24){\scriptsize $\mu_{5}$}
	\put(79,24){\scriptsize $\cong$} \put(241,24){\scriptsize $\cong$}
	\put(87,45){$\vector(0,-1){41}$} \put(248,45){$\vector(0,-1){41}$}
	\put(150,-1){\scriptsize $\Gamma_{5}(\omega)\oplus
		\xi_{5}$}
	\put(-20,-50){\scriptsize $ C_{5}X\supseteq\ker d_5
		\hspace{1mm}\vector(1,0){298}\hspace{1mm} \ker d_5\subseteq C_{5}X$}
	\put(78,46){$\vector(-2,-3){59}$}
	\put(82,95){$\vector(-2,-3){92}$} \put(32,35){\scriptsize
		$d_{6}$}\put(42,5){\scriptsize $j_{5}$}
	\put(260,42){$\vector(1,-1){83}$}
	\put(255,95){$\vector(3,-4){103}$}\put(170,-47){\scriptsize
		$\xi_{5}$} \put(302,35){\scriptsize
		$d_{6}$}\put(302,5){\scriptsize $j_{5}$}
	%\qbezier (100,7)(0,23)(40,-23)
	%	\put(100,06){$\vector(0,1){39}$}
	%	\put(102,24){\scriptsize	$\sigma_{}$}
	\put(335,-42){$\vector(-2,2){85}$}
	\put(299,-20){\scriptsize $\sigma_{5}$}
	\put(235,-37){\scriptsize $\coker b_{5}$}
	\put(248,-6){$\vector(0,-1){25}$}
	\put(250,-20){\scriptsize $pr$}
\end{picture}

\noindent In this diagram,    the two
side triangles and  the small and large trapezoids are commutative.  The rectangles are
not commutative. But for every $z\in \ker d_6\subset C_6X$ we have
\begin{equation}\label{zx22}
	\pi_5(\omega)\circ\beta_{6}(z)-\beta_{6}\circ \xi_6(z)=\Gamma_5(\omega)\circ b_6(z)-b_{6}\circ f_6(z)=0
\end{equation}
Here we invoke the diagram (\ref{y11}), the relations (\ref{zx23}) and   (\ref{x4}).
\smallskip

On  one hand, by (\ref{x5})  we have
\begin{equation}\label{x6}
	pr\circ\big((\Gamma_{5}(\omega) \oplus  \xi_5)\circ \mu_{5}\circ\beta_{6} - \mu_{5}\circ \pi_5(\omega)\circ\beta_{6}\big)= pr\circ\big(\pi_5(\omega) \circ
	\sigma_{5} \circ j_5\circ\beta_{6} - \sigma_{5} \circ j_5 \circ \pi_5(\omega)\circ\beta_{6}\big)
\end{equation}
On the other hand, using simultaneously (\ref{13}), (\ref{x15}),  (\ref{x50}),  (\ref{x3})  and (\ref{x4}) we get
\begin{eqnarray}
	\label{136}
	%	\mu'\circ  \pi_{5}(\omega )\circ  \beta	_{6}&=&\Big(\pi_5(\omega ) \circ	\beta	_{6}- \sigma'\circ j'_5 \circ \pi_{5}(\omega )\circ  \beta	_{6}\Big)\oplus j'_5 \circ\pi_5(\omega ) \circ	\beta	_{6}\nonumber\\
	%	&=&\Big(\pi_5(\omega ) \circ	\beta_{6}- \sigma'\circ \xi_{5 }\circ j_{5}\circ  \beta_{6}\Big)\oplus \xi_{5 }\circ j_{5} \circ	\beta_{6}\nonumber\\
	%	&=&\Big(\pi_5(\omega ) \circ\beta_{6}- \sigma'\circ \xi_{5 }\circ d_{6}\Big)\oplus \xi_{5 }\circ d_{6}\nonumber\\
	%&=&\Big(\pi_5(\omega ) \circ\beta_{6}- \sigma'\circ d'_{6}\circ \xi_{6}\Big)\oplus  \xi_{5 }\circ d_{6}\nonumber\\
	pr\circ	(\Gamma_{5}(\omega)\oplus
	\xi_{5})\circ\mu_{5}\circ\beta_{6}&=&pr\circ(\Gamma_{5}(\omega)\oplus
	\xi_{5})\circ\Big((\beta_6-\sigma_{5}\circ j_5\circ\beta_6)\oplus j_5\circ\beta_6\Big),\nonumber\\
	&=&pr\circ\Gamma_{5}(\omega)\circ(\beta_6-\sigma_{5}\circ d_6)\oplus \xi_5\circ j_5\circ\beta_6,\nonumber\\
	&=&pr\circ\gamma\circ(\beta_6-\sigma_{5}\circ d_6)\oplus \xi_5\circ d_6,\nonumber\\
	&=&\tilde{\gamma}\circ pr\circ(\beta_6-\sigma_{5}\circ d_6)\oplus \xi_6\circ d_6,\nonumber\\
	&=&\tilde{\gamma}\circ\theta_X\oplus \xi_6\circ d_6,\nonumber\\
	pr\circ		\mu_{5}\circ \beta_{6}\circ\xi_6&=&	pr\circ	(\beta_{6}\circ\xi_6- \sigma_{5}\circ j_5\circ\beta_{6}\circ\xi_6)\oplus j_5\circ \beta_{6}\circ \xi_6,\nonumber\\
	&=&	pr\circ(\beta_{6}\circ\xi_6- \sigma_{5}\circ d_{6}\circ\xi_6)\oplus d_{6}\circ\xi_6,\nonumber\\
	&=&\theta_X\circ\xi_6\oplus d_{6}\circ\xi_6.
	%	&=&(\pi_{5}(\omega)\circ\beta_6-\pi_{5}(\omega)\circ\sigma\circ d_6)\oplus \xi_6\circ d_6\nonumber
\end{eqnarray}
Here we use the two formulas $d_{6}=j_{5}\circ \beta_{6}$ and
$\xi_{5 }\circ j_{5}= j_{5} \circ \pi_5(\omega )$ (see \ref{z19}). Now from  (\ref{136}), we get 
\begin{equation}\label{zx1}
	pr\circ\Big(\mu_{5}\circ \beta_{6}\circ\xi_6-		(\Gamma_{5}(\omega)\oplus
\xi_{5})\circ\mu_{5}\circ\beta_{6}\Big)=\theta_X\circ\xi_6-\tilde{\gamma}\circ\theta_X.
\end{equation}
Next, from  (\ref{x6}) and (\ref{zx1}) we obtain 
$$pr\circ\Big(\mu_{5}\circ( \beta_{6}\circ\xi_6-		  \pi_5(\omega)\circ\beta_{6})\Big)=pr\circ\big( \pi_5(\omega) \circ
\sigma_{5} \circ d_{6} - \sigma_{5} \circ \xi_5\circ d_{6}\big)+\theta_X\circ\xi_6-\tilde{\gamma}\circ\theta_X,$$
and using    the formula (\ref{x3}) leads to
$$	pr\circ\Big(\mu_{5}\circ( \beta_{6}\circ\xi_6-		 \pi_5(\omega)\circ\beta_{6})\Big)=pr\circ\big( \pi_5(\omega) \circ
\sigma_{5} \circ j_5\circ\beta_{6} - \sigma_{5} \circ j_5 \circ \pi_5(\omega)\circ\beta_{6}\big)+pr\circ g\circ d_6,$$
which can also be written as
\begin{equation}\label{zx8}
		pr\circ\Big(\mu_{5}\circ( \beta_{6}\circ\xi_6-		 \pi_5(\omega)\circ\beta_{6})\Big)=pr\circ\big( \pi_5(\omega) \circ
\sigma_{5} \circ j_5- \sigma_{5} \circ j_5 \circ \pi_5(\omega)+ g\circ j_5\Big)\circ\beta_6.
\end{equation}
% \newpage
%implying that
%$pr\circ\Big(\mu'_{5}\circ( \beta'_{6}\circ\xi_6-  \pi_{5}(\omega )\circ  \beta_{6})\Big)=(g-\pi_{5}(\omega)\circ\sigma_{5}+\sigma'_{5}\circ \xi_{5 })\circ d_6.$
As $$j_5\circ(\beta_{6}\circ\xi_6-  \pi_{5}(\omega)\circ\beta_6)=j_5\circ\beta_{6}\circ\xi_6-  j_5\circ\pi_{5}(\omega)\circ\beta_6=d_{6}\circ\xi_6- \xi_{5 }\circ d_6=0,$$ 
it follows that $\im (\beta_{6}\circ\xi_6-  \pi_{5}(\omega)\circ\beta_6)\subset \Gamma_5(X)$ and applying Remark \ref{rr2}, we get  
$$\mu_{5}\circ( \beta_{6}\circ\xi_6-		  \pi_5(\omega)\circ\beta_{6})=\beta_{6}\circ\xi_6-		  \pi_5(\omega)\circ\beta_{6},$$
therefore, the relation (\ref{zx8}) becomes
$$	pr\circ\Big( \beta_{6}\circ\xi_6-		  \pi_5(\omega)\circ\beta_{6}\Big)=pr\circ\big( \pi_5(\omega) \circ
\sigma_{5} \circ j_5- \sigma_{5} \circ j_5 \circ \pi_5(\omega)+ g\circ j_5\Big)\circ\beta_6,$$
in the other words we get
\begin{equation}\label{zx4}
	pr\circ\Big( \beta_{6}\circ\xi_6-	\big(	  \pi_5(\omega)+ \pi_5(\omega) \circ
\sigma_{5} \circ j_5-\sigma_{5} \circ \xi_{5 }\circ j_{5}+ g\circ j_5\big)\circ\beta_6\Big)=0\in \coker b_6.
\end{equation}
Let us put 
\begin{equation}\label{zx3}
s= \big(\pi_5(\omega) \circ
\sigma_{5} -\sigma_{5} \circ \xi_{5 }+ g\big)\,\,\,\,\,\,\,\,\,\,\,\,\,\text{ and }\,\,\,\,\,\,\,\,\,\,\,t= 	  \pi_5(\omega)+s\circ j_5.
\end{equation}
Notice that  $j_5\circ\pi_5(\omega) \circ
\sigma_{5} -j_5\circ\sigma_{5} \circ \xi_{5 }=\xi_{5 }\circ j_{5}\circ
\sigma_{5} -j_5\circ\sigma_{5} \circ \xi_{5 }=0$. Here we use  $j_5\circ\pi_5(\omega)=\xi_{5 }\circ j_{5}$ and $j_5\circ\sigma_{5}=id$. Therefore, 
\begin{equation}\label{xx4}
\pi_5(\omega) \circ
\sigma_{5} -\sigma_{5} \circ \xi_{5 }+ g\in \H(\ker d_5; \Gamma_5(X)).
\end{equation}
On the one hand,  the relation (\ref{zx4}) becomes $	pr\circ\big( \beta_{6}\circ\xi_6- t\circ\beta_6\big)=0\in \coker b_6$ which implies that  
$$ \im \Big( \beta_{6}\circ\xi_6- t\circ\beta_6\Big)\subset \im \beta_{6},$$
and since $C_{6}X$ is a free abelian group, there exists a homomorphism $\lambda$ making the following diagram commutes
\begin{equation*} \label{x7}
	\begin{picture}(180,80)(40,35)
		\put(45,98){$\vector(0,-1){40}$}
		\put(37,102){$
			C_{6}X$}
		\put(37,45){$C_{6}X\vector(1,0){118}\vector(1,0){2}\,\im \beta_{6}\subset\Gamma_{5}(X)\overset{pr}{\twoheadrightarrow}\coker b_6$} \put(47,75){\scriptsize
			$\lambda$}
		\put(52,98){$\vector(3,-1){130}$} \put(130,75){\scriptsize $\beta_{6}\circ\xi_6- t\circ\beta_6$}
		\put(107,48){\scriptsize $\beta_{6}$}	
		%\put(97,98){$\vector(2,-1){82}$}
	\end{picture}
\end{equation*}

\noindent It is worth mentioning  that the homomorphism $\lambda$ can be chosen such that
\begin{equation}\label{x8}
	\lambda(x)=0 \,\,\,\,\,\,\,\,\,\,\,\,\,,\,\,\,\,\,\,\,\,\,\,\, \forall x\in\ker d_{6}.	
\end{equation}

On the other hand, according to Lemma \ref{l1}, the map $\omega:X^5\to X^5$ and the  homomorphism $\pi_5(\omega) \circ
\sigma_{5} -\sigma_{5} \circ \xi_{5 }$, given in (\ref{xx4})),  allow us to define   a map $\eta:X^5\to X^5$ such that 
\begin{equation}\label{zx5}
\pi_{5}(\eta)=\pi_{5}(\omega)+(\pi_5(\omega) \circ
\sigma_{5} -\sigma_{5} \circ \xi_{5 }+ g)\circ j_5.
\end{equation}
 Consequently, if we set $\rho=\xi_6-\lambda$, then the following diagram commutes

\begin{picture}(300,80)(10,40)
	\put(75,100){ $ C_{6}X
		\hspace{1mm}\vector(1,0){140}\,C_{6}X$}
	\put(89,76){\scriptsize $\beta_{6}$} \put(250,76){\scriptsize $\beta_{6}$}
	\put(87,97){$\vector(0,-1){39}$} \put(248,96){$\vector(0,-1){39}$}
	\put(165,103){\scriptsize $\rho$} \put(160,52){\scriptsize
		$\scriptsize \pi_{5}(\eta )$} \put(73,48){ $ \pi_{5}(X^5)\hspace{1mm}\vector(1,0){130}\hspace{1mm}\pi_{5}(X^5)$} 
\end{picture}

\noindent Indeed, an easy computation shows that
\begin{eqnarray}
	\beta_{6} \circ \rho -\pi_{5}(\eta )\circ\beta_{6}&=& \beta_{6}\circ \xi_6- \beta_{6}\circ\lambda -\pi_{5}(\eta )\circ\beta_{6},\nonumber\\
	&=&  \beta_{6}\circ \xi_6- \beta_{6}\circ\xi_6+ t\circ\beta_6-\pi_{5}(\eta )\circ\beta_{6}.\nonumber
\end{eqnarray}
Taking into account of the two relations (\ref{zx3})  and (\ref{zx5}) we deduce that
\begin{eqnarray}
t\circ\beta_6&=& \big(	  \pi_5(\omega)+ \pi_5(\omega) \circ
\sigma_{5} \circ j_5-\sigma_{5} \circ \xi_{5 }\circ j_{5}+ g\circ j_5\big)\circ\beta_6,\nonumber\\
\pi_{5}(\eta)\circ\beta_6&=&\pi_{5}(\omega)\circ\beta_6+g\circ j_5\circ\beta_6-\sigma_{5} \circ\xi_5\circ j_5\circ\beta_6+\pi_5(\omega) \circ
\sigma_{5}\circ j_5\circ\beta_6.\nonumber
\end{eqnarray}
As a result, we get $\beta_{6} \circ \rho -\pi_{5}(\eta )\circ\beta_{6}=0$. Hence,  applying Remark \ref{r01}, we can  extend the map  $\eta $  to get a map $$\tilde{\eta} :X=X^6 \longrightarrow  X=X^6,$$ with $C_{6}(\tilde{\eta}) = \rho$. Finally, let us prove that 
\begin{equation}\label{x10}
	H_{i}(\tilde{\eta})=f_{i}\,\,\,\,\,\,\,\,\,\,\,\,\,,\,\,\,\,\,\,\,\,\,\,\, i=3,4,5,6.
\end{equation}
First, as  $\tilde{\eta}$ is an extension of  the map $\eta$ to $X$ and from Lemma \ref{l1}, it follows that 
$$C_{\leq 5}(\tilde{\eta})=C_{\leq 5}(\eta)=C_{\leq 5}(\omega)   :C_{\leq 5}(X)\to C_{\leq 5}(X).$$ 
Next, if $x\in\ker d_{6}$, then by (\ref{x8}), we get
$$\rho(x)=\xi_6(x)-\lambda(x)=\xi_6(x).$$
Therefore, the chain map $C_{\leq 6}(\tilde{\eta}) :C_{\leq 5}(X)\to C_{\leq 6}(X)$  satisfies the relation	 
$$C_{\leq 6}(\tilde{\eta})=\xi_{\leq 6},$$
meaning that the relation (\ref{x10}) is satisfied.
\end{proof}

 \begin{corollary}
	\label{c22}                                                    
If $X$ is an   object  in  $\bf{CW^6_2}$, then   the following sequence of groups is exact. 
$$0\to\E_*(X) \to\E(X)\overset{\Psi}{\longrightarrow  } \Gamma\mathcal{S}(X)\to 0.$$
\end{corollary}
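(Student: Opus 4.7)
The statement to be proved is essentially the packaging of what has already been established into an exact sequence of groups, so the plan is to verify the three assertions that constitute exactness at each position.

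First, I would observe that $\Psi$ has already been shown to be a group homomorphism in the paragraph preceding Theorem~\ref{t4}, via the computation $\Psi([\alpha]\cdot[\alpha'])=H_*(\alpha\circ\alpha')=H_*(\alpha)\circ H_*(\alpha')=\Psi([\alpha])\circ\Psi([\alpha'])$. Exactness at $\Gamma\mathcal{S}(X)$, i.e.\ the surjectivity of $\Psi$, is precisely the content of Theorem~\ref{t4}, so nothing new is needed there.

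Next, exactness at $\E(X)$ amounts to the identification $\ker\Psi=\E_*(X)$. This is immediate from the definitions: by construction $\Psi([\alpha])=H_*(\alpha)$, so $[\alpha]\in\ker\Psi$ if and only if $H_i(\alpha)=\mathrm{id}_{H_i(X)}$ for every $i$, which is precisely the defining condition for $[\alpha]\in\E_*(X)$. Consequently the inclusion $\E_*(X)\hookrightarrow\E(X)$ has image equal to $\ker\Psi$, which is what exactness requires.

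Finally, exactness at $\E_*(X)$ is automatic since the leftmost map is an inclusion of a subgroup, hence injective. Combining these three observations yields the short exact sequence. I do not anticipate any serious obstacle here: all of the work has been carried out in the definition of $\E_*(X)$, in the verification that $\Psi$ is a homomorphism, and above all in Theorem~\ref{t4}, which supplied the nontrivial surjectivity. The corollary is therefore a one-line consequence of assembling these facts.
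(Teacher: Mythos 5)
Your proposal is correct and follows exactly the paper's own argument: surjectivity is Theorem~\ref{t4}, and the identification $\ker\Psi=\E_*(X)$ is read off directly from the definitions of $\Psi$ and $\E_*(X)$. Nothing further is needed.
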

\begin{proof}
It follows from  Theorem  \ref{t4} and the following observation
$$\ker \Psi=\Big\{[\alpha]\in\E(X) \mid \Psi([\alpha]) =H_{*}(\alpha)=id\Big\}=\E_*(X).$$	
 Therefore $\E(X)/\E_*(X)\cong\Gamma\mathcal{G}(X)$.
\end{proof}

We end this work  by given two examples illustrating the method developed in this paper.   
 \begin{example}
	\label{e5}Let $X$  be  an object of $\bf{CW^6_2}$ such that 
$$H_{6}(X)=\Z\,\,\,\,\,\,\,\,\,\,\,\,\,,\,\,\,\,\,\,\,\,\,\,\,H_{4}(X)\cong \Z_{p}\,\,\,\,\,\,\,\,\,\,\,\,\,,\,\,\,\,\,\,\,\,\,\,\,H_{3}(X)\cong \Z_{n},$$
with $gcd(p,2)=gcd(n,2)=1$. An easy computation shows that
$$H_{4}(X)\otimes\Z_{2}=0\,\,\,\,\,,\,\,\,\,\,\K_{2}(H_{3}(X);\Z)=0\,\,\,\,\,,\,\,\,\,\,\Gamma_5(X)=\Gamma_4(X)=\Gamma_3(X)=0.$$
As a consequence,  WES$(X)$, given in  Corollary  \ref{c01},  can be written as follows 
$$H_{6}(X)\overset{b_{6}}{\longrightarrow}0\rightarrow 
\pi_{5}(X)\overset{h_{5}}{\longrightarrow}H_{5}(X)\rightarrow  0$$ 
In this case $\Gamma\mathcal{G}(X)$ is  the group of  the following  pairs $$(f_{6},f_5)\in\aut(\Z)\times\aut(H_{5}(X))\cong \Z_2\times\aut (H_{5}(X))$$ for which there exists $\phi$ making the following diagram commutes

	\begin{picture}(300,100)(-25,05)
	\put(67,80){ $\Z\overset{b_{6}}{\longrightarrow  }0\longrightarrow 
		\pi_{5}(X)\overset{}{\longrightarrow} H_{5}(X)\rightarrow  0$}
	\put(135,76){$\vector(0,-1){50}$}
	\put(70,17){$\Z\overset{b_{6}}{\longrightarrow  }0\longrightarrow 
		\pi_{5}(X)\overset{}{\longrightarrow} H_{5}(X)\rightarrow  0$}
	\put(75,76){$\vector(0,-1){50}$} \put(65,50){\scriptsize
		$f^{}_{6}$} 
%	\put(137,50){\scriptsize
	%	$h_5^{-1}\circ f_5\circ h_5$} 
	%	\put(428,50){	$(2)$}
	\put(190,76){$\vector(0,-1){50}$} 
	\put(192,50){\scriptsize $f^{}_{5}$}
	\put(100,76){$\vector(0,-1){50}$}
	 \put(137,50){\scriptsize $\phi$}
\end{picture}

\noindent  Thus $\E(X)/\E_*(X)\cong\Gamma\mathcal{G}(X)\cong \aut(\Z)\times \aut(H_{5}(X))\cong \Z_2\times \aut(H_{5}(X)).$
Moreover, if $H_{5}(X)\cong \Z_{m}$, then 
$$\E(X)/\E_*(X)\cong \Z_2\times \Z^*_{m}.$$
where $\Z^*_{m}$ is the multiplicative group of the units of $\Z_{m}$.
\end{example}
\begin{example}
	\label{e8}Let $X$  be  an object of $\bf{CW^6_2}$ such that 
	$$H_{6}(X)=\Z\,\,\,\,\,,\,\,\,\,\,H_{4}(X)\cong \Z_{2}\oplus\Z_{2}\,\,\,\,\,,\,\,\,\,\,H_{3}(X)\cong \Z_{n}\,\,\,\,\,,\,\,\,\,\,H_{5}(X)\cong \Z_{8},$$
 where $gcd(n,2)=1$.  Therefore we have 
	$$H_{4}(X)\otimes\Z_{2}=\Z_{2}\oplus\Z_{2}\,\,,\,\,\K_{2}(H_{3}(X);\Z)=0\,\,,\,\,\Gamma_5(X)=\Z_{2}\oplus\Z_{2}\,\,,\,\,\Gamma_4(X)=\Gamma_3(X)=0.$$
 Then   WES$(X)$  can be written as follows 
\begin{equation}\label{x31}
	\Z\overset{b_6}{\longrightarrow}\Z_{2}\oplus\Z_{2}\rightarrow 
\pi_{5}(X)\twoheadrightarrow\Z_{8}. 
\end{equation}
If we choose  $b_6(1)=(1,0)$, then  (\ref{x31}) can be  split to give the following 
$$\Z\overset{b_6}{\longrightarrow}\Z_{2}\oplus\Z_{2}\,\,\,\,\,\,\,\,\,\,\,\,\,,\,\,\,\,\,\,\,\,\,\,\,\,\,\Z_{2}\rightarrowtail
\pi_{5}(X)\twoheadrightarrow\Z_{8}.$$ 
therefore $[\pi_{5}(X)]\in \mbox{Ext}(\Z_{8},  \Z_{2})\cong \Z_{2}$. 
Recall that  $\Gamma\mathcal{G}(X)$ is  the group of  the  automorphisms 
$$(f_{6},f_5)\in\aut(\Z)\times \aut(\Z_{8})\cong \Z_2\times\Z_4\cong\{-1,1\}\times \{1,3,5,7\},$$
such that 
$	(f_5)^*([\pi_{5}(X)] )= (\tilde{\gamma})_*([\pi_{5}(X)]),$ 
where 
$$(f_5)^*,(\tilde{\gamma})_*:\mbox{Ext}(\Z_{8},  \Z_{2})\cong \Z_{2}\to \mbox{Ext}(\Z_{8},  \Z_{2})\cong \Z_{2}\,\,,\,\,\,f_5\in \aut(\Z_{8})\cong\Z_4\cong \{1,3,5,7\}.$$
Here we invoke the formula (\ref{y15}). Note that  $\gamma\in \aut(\Gamma_5(X))=\aut(\Z_{2}\oplus\Z_{2})=\{id\}$ which implies   that $\tilde{\gamma}=id$ and $(\tilde{\gamma})_*=id$. Therefore, 
\begin{equation}\label{y22}
(f_5)^*([\pi_{5}(X)] )= (\tilde{\gamma})_*([\pi_{5}(X)])=[\pi_{5}(X)].
\end{equation}
We distinguish two cases.

\noindent Case 1:  $[\pi_{5}(X)]=0$. In this case the formula (\ref{y22}) is trivially satisfied. Thus, $$\Gamma\mathcal{G}(X)=\Z_2\oplus\Z_4$$

\noindent Case 2:  $[\pi_{5}(X)]=1$. In this case the formula (\ref{y22}) becomes 
$$(f_5)^*(1 )= (\tilde{\gamma})_*([1])=[1],$$
which is also  trivially satisfied for every $f_5\in \aut(\Z_{8}).$  
In this case $\Gamma\mathcal{G}(X)$ is  the group of  the  automorphisms 
$(f_{6},f_5)\in \Z_2\times\Z_4\cong\{-1,1\}\times \{1,3,5,7\}.$ 

Thus, $\E(X)/\E_*(X)\cong\Gamma\mathcal{G}(X)\cong  \Z_2\oplus\Z_4.$
\end{example}

%\textbf{Data availability Statement}

%Not applicable.\\

%\textbf{Conflict of interest}

%The author  has not disclosed any competing interests.

\bibliographystyle{amsalpha}

\begin{thebibliography}{10}

	\bibitem{Bau2}  H.J. Baues, {\em Homotopy Types and Homology}, Oxford Mathematical Monographs, Oxford University Press,, Oxford, 1996, 496 pages.

	\bibitem{BG}  H.J. Baues and P. Goerss, \emph{A homotopy operation spectral sequence for the computation
	of homotopy groups,} Topology 39,  161-192, 2000.


\bibitem{B1} M. Benkhalifa, \emph{The effect of  cell-attachment on the group of self-equivalences of an elliptic  space}, Michigan Mathematical Journal, Vol.71(2), 611-617, 2022.

\bibitem{B2}M. Benkhalifa, \emph{On the group of self-homotopy equivalences of an elliptic space},
Proceedings of the American Mathematical Society. Vol.148 (6),2695-2706, 2020.


	%\bibitem{BauL} Baues H. J. and Lemaire J. M.: Minimal models in homotopy theory, {\em Math. Ann} (1977), {\bf 225} (1977) 219-42
	
	\bibitem{B3}M. Benkhalifa,  \emph{On the group of self-homotopy equivalences of $n$-connected and $(3n+2)$-dimensional CW-Complex},
	Topology and its Applications, Vol. 233, 1-15,
	2018.	
	
	
	\bibitem{B4} M. Benkhalifa,  {\em Postnikov decomposition and the the group of self-equivalences of a rationalized space
	}. Homology, Homotopy and Applications, Vol. 19(1),  209-224, 2017.
	
	
%	\bibitem{Benk1} M. Benkhalifa and S. B. Smith,  {\em The effect of cell attachment on the group of self-equivalences of an
	%	R-local space}. Journal of Homotopy and Related Structures. {Vol. 4(1)}, (2015), 135-144
	
	\bibitem{B15} M. Benkhalifa, \emph{The  group of  self-equivalences  of  simply connected 4-dimensional CW-complex}, International Electronic Journal of Algebra. Vol. 19, 19-34, 2016.
	
%	\bibitem{B3} M. Benkhalifa,   \textit{Realizability of the group of rational self-homotopy  equivalences
%	}, Journal of Homotopy and Related Structures, Vol. 5(1) , 361-372, (2011).  
	
	
	
	
	\bibitem{B5} M. Benkhalifa, {\em On the classification problem of the quasi-isomorphism classes of free chain algebras,}  J.Pure Appl. Algebra,  Vol. 210(2), 343-362, 2007.
	\bibitem{Ben3} M. Benkhalifa {\em On the classification problem of the quasi-isomorphism classes of 1-connected minimal free cochain algebras, }Topology and its	Applications  155, 1350-1370, 2008.
	%\bibitem{Ben4} Benkhalifa M. and  Abughazalah N.:  {\em On the homotopy classification of n-connected %
	%	(3n+2)-dimensional free differential graded Lie algebra,} Central
	%European Journal of Mathematics (2005), {\bf 3}, N. 1, 58-75.

		\bibitem{B6} M. Benkhalifa,   \emph{On the homotopy type of a chain algebra}. Homology, Homotopy and
	Applications. Vol. 6, 109-137, 2004.
	
	\bibitem{B7} M. Benkhalifa,    {\em Sur le type d'homotopy d'un CW-complex,} Homology, Homotopy and Applications, Vol. 5(1),  101–120, 2003.
	
	\bibitem{ma} 	S. Mac lane,  \emph{Homology}, Springer 1967.
	
	\bibitem{Mi} C. Miller {\em The second homology group  of  a  group;
		relations  among commutators}, Proc. Amer. Math. Soc. 3, 588-595, 1952.
	\bibitem{Whi} Whitehead J. H. C. {\em A certain exact sequence,}  Ann. Math. 52,  51-110, 1950.
	
	
\end{thebibliography}

\end{document}